\journal{LMS Journal of  Computation and Mathematics}
\newtheorem{thm}{Theorem}
\newtheorem{lem}[thm]{Lemma}
\newtheorem{defn}[thm]{Definition}
\begin{document}
\newcommand{\etal}[1]{\emph{et al.}#1}
\newcommand{\oankm}{\text{OA}(N,k-1,s,t)}
\newcommand{\oan}{\text{OA}(N,k,s,t)}\newcommand{\oano}{\text{OA}(N,k_0,s,t)}
\newcommand{\oank}{\text{OA}(N,k+1,s,t)}
\newcommand{\oal}{\text{OA}(\lambda s^t,k,s,t)}
\newcommand{\fnst}{f(N,s,t)}
\newcommand{\GAbBdc}{G(\mathbf{A},\mathbf{b},\mathbf{B},\mathbf{d},\mathbf{c})}
\newcommand{\GAbc}{G(\mathbf{A},\mathbf{b},\mathbf{c})}
\newcommand{\fnt}{f(N,2,t)}
\newcommand{\ftnt}{f(2N,2,t+1)}
\newcommand{\fcan}{f^{\text{CA}}_{\lambda}(N,s,t)}
\newcommand{\fpan}{f^{\text{PA}}_{\lambda}(N,s,t)}
\newcommand{\fcant}{f^{\text{CA}}_{\lambda}(N,2,t)}
\newcommand{\fpant}{f^{\text{PA}}_{\lambda}(N,2,t)}

\newcommand{\kin}{k_{\mathrm{input}}}
\newcommand{\kout}{k_{\mathrm{out}}}
\newcommand{\oatntmm}{\text{OA}(2N,k,2,t+1)}
\newcommand{\oatntm}{\text{OA}(2N,k+1,2,t+1)}
\newcommand{\oantf}{\text{OA}(2N,k+1,2,t+1)}
\newcommand{\oantff}{\text{OA}(2N,k+2,2,t+1)}
\newcommand{\oanst}{\text{OA}(N,k,s,2)}
\newcommand{\oantm}{\text{OA}(N,k-1,2,t)}
\newcommand{\oant}{\text{OA}(N,k,2,t)}
\newcommand{\oantt}{\text{OA}(N,k+1,2,t)}
\newcommand{\oalt}{\text{OA}(\lambda 2^t,k,2,t)}
\newcommand{\fxoptt}{f(\X^{opt}_{N})}
\newcommand{\oam}{\text{OA}(N,s^{k_1}_1s^{k_2}_2\cdots s^{k_v},t)}
\newcommand{\yoptt}{\Y^{opt}_{N,k}}
\newcommand{\xoptt}{\X^{opt}_{N,k}}
\newcommand{\x}{\mathbf{x}}
\newcommand{\M}{\mathbf{M}}
\newcommand{\N}{\mathbf{N}}
\newcommand{\B}{\mathbf{B}}
\newcommand{\HH}{\mathbf{H}}
\newcommand{\I}{\mathbf{I}}
\newcommand{\1}{\mathbf{1}}
\newcommand{\zz}{\mathbf{0}}
\newcommand{\aaa}{\mathbf{a}}
\newcommand{\bb}{\mathbf{b}}
\newcommand{\cc}{\mathbf{c}}
\newcommand{\uu}{\mathbf{u}}
\newcommand{\rrr}{\mathbf{r}}
\newcommand{\xx}{\mathbf{x}}
\newcommand{\y}{\mathbf{y}}
\newcommand{\J}{\mathbf{J}}
\newcommand{\dd}{\mathbf{d}}
\newcommand{\vvec}{\mathbf{vec}}
\newcommand{\Y}{\mathbf{Y}}
\newcommand{\X}{\mathbf{X}}
\newcommand{\D}{\mathbf{D}}
\newcommand{\A}{\mathbf{A}}\newcommand{\tA}{\tilde{\mathbf{A}}}
\newcommand{\XX}{\mathbf{X}^{\top}\mathbf{X}}
\newcommand{\XT}{\mathbf{X}^{\top}}
\newcommand{\bs}{\mathbf}
\newcommand{\0}{\mathbf{0}}
\newcommand{\orr}{\text{\ or \ }}
\newcommand{\md}[2][4]{#2\ \ (\mathrm{mod}\ \ #1)}
\newcommand{\rr}{\mbox{I\!\!\,R}}
\newcommand{\es}{\mathrm{E}(s^2)}
\newcommand{\smax}{s_{\mathrm{max}}}
\newcommand{\freq}{f_{\smax}}
\newcommand{\dfk}[1]{\Delta f^k(#1)}
\newcommand{\Z}{\mathbb{Z}}
\newcommand{\mth}[1]{\multicolumn{3}{c|}{#1}}
\newcommand{\mtw}[1]{\multicolumn{2}{c|}{#1}}
\newcommand{\GWP}{\mathrm{GWP}}
\begin{frontmatter}
\title{The linear programming relaxation permutation symmetry group of an orthogonal array defining integer linear program}
\author[AFITO]{David M. Arquette}
\ead{David.Arquette@afit.edu}
\address[AFITO]{Department of Mathematics and Statistics, Air Force Institute of Technology,\\Wright-Patterson Air Force Base, Ohio 45433, USA}
\author[AFITT]{Dursun A. Bulutoglu\corref{cor1}}
\ead{dursun.bulutoglu@gmail.com}
\address[AFITT]{Department of Mathematics and Statistics, Air Force Institute of Technology,\\Wright-Patterson Air Force Base, Ohio 45433, USA}
\cortext[cor1]{Corresponding author}
\begin{abstract}
  There is always a natural embedding of $S_s\wr S_k$ into the linear programming (LP) relaxation permutation symmetry group
of an orthogonal array  integer linear programming (ILP) formulation with equality constraints.
  The point of this paper is to prove that in the $2$ level, strength $1$ case  the LP relaxation permutation symmetry group of this formulation is isomorphic to $S_2\wr S_k$ for all $k$, and in the $2$ level, strength $2$ case it is isomorphic to $S_2^k\rtimes S_{k+1}$ for $k\geq 4$. The strength $2$ result reveals previously unknown  permutation symmetries that can not be captured by the natural embedding of   $S_2\wr S_k$.
We also conjecture a complete characterization of the LP relaxation permutation symmetry group of the ILP formulation.
\end{abstract}
\begin{keyword}
Factorial design \sep isomorphism pruning \sep J-characteristics \sep semidirect product \sep wreath product  \MSC[2010] 52B15
\end{keyword}
\end{frontmatter}
\section{Introduction}
 An $N$ run, $k$ factor, $s$ level factorial design is an $N\times k$ array where each column takes values from the set $\{0,1,\dots,s-1\}$.   If one design can be obtained from another by permuting runs (rows), factors (columns), or levels, then the two designs are said to be \textit{isomorphic}.    An orthogonal array $\oan$ is an  $N$  run,  $k$  factor,  $s$  level design where every  $t$-tuple appears within each combination of  $t$  columns $N/s^t=\lambda$ times. The values $t$ and $\lambda$ are called the \textit{ strength} and the \textit{index} of the $\oan$.  $\oan$ can be enumerated up to isomorphism by finding all non-isomorphic solutions of certain (ILP)s, see~\cite{Appa2006,Bulutoglu2008,Bulutoglu2016,Geyer2014,Rosenberg1995}. An ILP searches for the solution vector $\x$ 
to the following problem
\begin{equation}\label{eqn:ILP}
\min \ \mathbf{c}^{\top} \x:\ \mathbf{A} \mathbf{x}= \mathbf{b},\ \mathbf{B} \mathbf{x} \leq \mathbf{d}, \ \mathbf{x} \in \mathbb{Z}^n,
\end{equation}
where $\A$ and $\B$ are the equality and inequality constraint matrices, $\cc^{\top}\x$ is the objective function, and $\x$ is the integer solution vector.
The permutation symmetry group of an ILP is the set of all permutations of its variables that do not change the feasibility and optimality of its solutions.  More formally, Margot~\cite{Margot2010} defined the \textit{permutation symmetry group},  $G$, of an ILP to be
\begin{equation*}
G = \{\pi \in S_n ~|~ \mathbf{c}^{\top} \mathbf{x} = \mathbf{c}^{\top} \pi(\mathbf{x}) \text{ and } \pi(\mathbf{x}) \in \mathcal{F} \text{ } \forall~ \mathbf{x} \in \mathcal{F}\},
\end{equation*}
where  $\mathcal{F}$  is the set of all feasible solutions.  Symmetric ILPs can arise from a variety of problem formulations, see \cite{Liberti2012}.  In particular, ILPs for enumerating orthogonal arrays are highly symmetric. \par

Optimal solutions to ILPs are commonly found with a branch-and-bound or a branch-and-cut algorithm.  In the case of symmetric ILPs, many of the subproblems in an enumeration tree are isomorphic.  As a result, a drastic amount of computational time is wasted on solving identical problems.  As a remedy, Margot~\cite{Margot2003a,Margot2003b,Margot2007} developed a solver that is able to decrease and potentially eliminate such redundant computations by exploiting a subgroup of an ILP's permutation symmetry group when pruning an enumeration tree.  Exploiting larger subgroups results in increased reductions in redundant computations, and the greatest reduction is attained if the full permutation symmetry group of the ILP is exploited.  Hence, it is desirable to find larger subgroups of the permutation symmetry group of an ILP. \par

 Let $\A(\pi, \sigma)$ be the matrix obtained from $\A$ by permuting its columns by $\pi$ and its rows by $\sigma$.
Then for ILP (\ref{eqn:ILP})  \[G(\A,\bb,\B,\dd,\cc)= \left\{\pi \  | \ \pi(\cc)=\cc, \ \exists \ \sigma \ \mbox{with}\ \A(\pi, \sigma)=\A,\ \B(\pi, \sigma)=\B,
\   \sigma\left[{\bb\atop \dd}\right]=\left[{\bb\atop \dd}\right]\right\}\]
 is defined to be the {\em formulation symmetry group}.
Clearly,  $\GAbBdc \subseteq G$, and computational experiments suggest that solution times can be improved by several orders of magnitude if Margot's solver is used with  $\GAbBdc$  on ILPs with large $|\GAbBdc|$, 
see~\cite{Margot2010}.  \par

 The \textit {LP relaxation permutation symmetry group},  $G^{\rm LP}$  is the set of all permutations of variables that send LP feasible points to LP feasible points with the same objective function value. Clearly,  $G^{\rm LP}$ contains the formulation symmetry group. The  formulation symmetry groups of two different ILP formulations with the same set of variables, LP relaxation feasible set and objective function can be different. Hence, the formulation symmetry group
   does not always capture all the permutation symmetries in the LP relaxation of the ILP.   On the other hand, assuming that the set of variables in the ILP is fixed, $G^{\rm LP}$ depends only on the LP relaxation feasible set, and captures all the symmetries therein. 
For an LP relaxation without equality constraints, where each constraint in  $\mathbf{A} \mathbf{x} \leq \mathbf{b}$  is a facet (non-redundant),  $G^{\rm LP} = \GAbc$. 
\par 

Since the permutation symmetry group  $G$ is determined by the ILP's feasible set, identifying all symmetries in an ILP is difficult.  Margot~\cite{Margot2010} proved that deciding if  $G = S_n$  is NP-Complete.  Therefore, finding  $G$  for any given ILP is NP-Hard.  In order to find most of the symmetries in an ILP, one can simply find the permutation symmetry group of the LP relaxation.   Let  $\mathcal{F}$  and  $\mathcal{F}^{LP}$  be the sets of feasible solutions of an ILP and its LP relaxation, respectfully. Then $\mathcal{F} \subseteq \mathcal{F}^{LP}$ implies that
\begin{equation}\label{eqn:contain}
\GAbBdc \subseteq G^{\rm LP} \subseteq G
\end{equation}
 and there are examples where one or both of the containments in (\ref{eqn:contain}) are strict, see Geyer \cite{Geyer2014b}. \par

Let  the OA$(N,k,s,t)$ defining ILP (3) of Theorem 2 in Bulutoglu and Margot~\cite{Bulutoglu2008}  be called  the Bulutoglu and Margot formulation.
 Geyer~\cite{Geyer2014b} developed an algorithm for finding  $G^{\rm LP}$  of  a generic LP with equality constraints.  Furthermore, Geyer~\cite{Geyer2014b} observed that  $\GAbBdc \subsetneq G^{\rm LP}$  for  the Bulutoglu and Margot formulation  for finding OA$(N, k, 2, t)$  when  $t$  is even.  Geyer, Bulutoglu, and Rosenberg~\cite{Geyer2014} proved that $\GAbBdc$ is isomorphic to $S_s \wr S_k$  for this formulation.  They also showed  $G^{\rm LP}\cong S_{(s-1)}\wr S_k$  for a formulation without equality constraints.  For the strength $1$ and $2$ cases we prove the  computational observations in~\cite{Geyer2014b}  by explicitly finding the  $G^{\rm LP}$  of a different ILP formulation with the same set of variables and inequality constraints, but different equality constraints.  The equality constraints of this ILP formulation are linear combinations of those of the Bulutoglu and Margot formulation.  Furthermore, both ILPs have the same number of non-redundant equality constraints.  Hence, one can go back and forth between the two ILP formulations by applying a sequence of row operations to the equality constraints of each.  This implies that the feasible sets of the LP relaxations of these two ILP formulations are the same, so  their $G^{\rm LP}$s  must also be the same. The new ILP formulation makes it possible to find the $G^{\rm LP}$ of the Bulutoglu and Margot formulation by drastically simplifying this problem. \par

The new ILP formulation we use stems from the concept of J-characteristics. Let $\mathbf{D}$ be a $2$ level, $N$  run (row), $k$  factor (column) design with  levels from $\{1,-1\}$.
 Let the frequency vector,  $\mathbf{f}$ of $\mathbf{D}$, have the frequency of each of the  $2^k$  possible factor level combinations as its entries.  Hence,  $\mathbf{f}$  determines  $\mathbf{D}$  up to reordering of its runs.  Then  the \textit{J-characteristics}
of   $\mathbf{D}$ are given by
\begin{equation*}
J_l = \sum_{i = 1}^N \prod_{j \in l} d_{ij}
\end{equation*}
for  $l \subseteq \mathbb{Z}_k$, where $d_{ij}$ is the $(i,j)^\text{th}$ entry of $\D$.  It is well known that  $\mathbf{D}$  is uniquely determined by its J-characteristics up to reordering of its runs; furthermore,  $\mathbf{D}$  is an orthogonal array of strength  $t$  if and only if  $J_l = 0$  for all  $l \subseteq \mathbb{Z}_k$  with  $|l| \leq t$ and  $l \neq \emptyset$, see~\cite{Stufken2007}.

\section{Preliminaries and the main results}
Let  $\mathbf{1}$  be the column vector of length  $2^k$  for which every entry is one.  For  $i = 1, \ldots, k$, let  $\mathbf{x}_i$  be the  $i^\text{th}$  column ($i^\text{th}$  main effect) of the  $k$ factor, $2$ level ($\pm1$) full factorial design (the design that contains each of the $2^k$ factor level combinations exactly once).  For distinct  $i_1, \ldots, i_j \in \{1, \ldots, k\}$ with $j\geq2$, let  $\mathbf{x}_{i_1, \ldots, i_j}$  represent the  $j$ factor interaction term given by the Hadamard product  $\mathbf{x}_{i_1} \odot \cdots \odot \mathbf{x}_{i_j}$, where the $p^\text{th}$ entry of $\mathbf{x}_{i_1} \odot \cdots \odot \mathbf{x}_{i_j}$ is
the product of the entries on the $p^\text{th}$ row of the matrix   $[\mathbf{x}_{i_1}, \cdots,  \mathbf{x}_{i_j}]$. \par
	Equation (5) in ~\cite{Stufken2007} and the fact that  $\mathbf{D}$  is an orthogonal array of strength  $t$  if and only if  $J_l = 0$  for all  $l \subseteq \mathbb{Z}_k$  with  $|l| \leq t$ and  $l \neq \emptyset$ give us the following $\oant$ defining ILP
\begin{equation}\label{eqn:Mf}
\min \ 0 : \ \mathbf{M} \mathbf{f} = \mathbf{J}, \  \mathbf{f} \geq \0, \ \mathbf{f}\in \mathbb{Z}^{2^k},
\end{equation}
 where  $\mathbf{M}$  is the  $\sum\limits_{i=0}^t \binom{k}{i}$  by  $2^k$  matrix
\begin{equation}\label{eqn:M}
\mathbf{M} = \begin{bmatrix}
\mathbf{1}^{\top} \\
\mathbf{x}_1^{\top} \\
\vdots \\
\mathbf{x}_k^{\top} \\
\mathbf{x}_{1, 2}^{\top} \\
\vdots \\
\mathbf{x}_{{k - t + 1}, \dots, k}^{\top} \end{bmatrix},
\end{equation}
$\mathbf{J}$  is the J-characteristic vector with entries  $J_l$  for  $|l| \leq t$, 
\begin{equation}\label{eqn:J}
\mathbf{J} = \begin{bmatrix}
N \\
0 \\
\vdots \\
0 \end{bmatrix},
\end{equation}
$\0$ is the all zeros vector, and  $\mathbf{f}$  is the frequency vector of a hypothetical OA$(N, k, 2, t)$.  

Our goal is to find the subgroup of the permutation group  $S_{2^k}$  that sends the LP relaxation feasible solutions  ($\mathbf{f} \in \mathbb{Q}_{\geq 0}^{2^k}$) of ILP (\ref{eqn:Mf}) to LP relaxation feasible solutions.   The equality constraints of this ILP are linear combinations of those of the Bulutoglu and Margot formulation.  Both ILPs have the same inequality constraints, and each ILP has  $\sum\limits_{i = 0}^t \binom{k}{i}$  non-redundant equality constraints.  Hence, both ILPs have the same LP relaxation feasible set, and this implies that both have the same LP relaxation permutation symmetry group.  From this point on, we shall refer to this group as  $G^{\rm LP}$.
The main results of this paper are the following theorems.
\begin{thm} \label{thm:easy}
For an OA$(N,k,2,1)$
\begin{equation*}
G^{\rm LP} \cong S_2 \wr S_k.
\end{equation*}
\end{thm}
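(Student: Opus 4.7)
The plan is to establish both containments of the isomorphism. The inclusion $S_2\wr S_k\hookrightarrow G^{\rm LP}$ is the natural embedding mentioned in the abstract: a signed coordinate permutation of $\{\pm1\}^k$ sends each $\mathbf{x}_i^{\top}$ to $\pm\mathbf{x}_{\sigma(i)}^{\top}$, fixes $\mathbf{1}^{\top}$, and leaves $\mathbf{J}$ unchanged, so it carries the LP relaxation feasible set of ILP (\ref{eqn:Mf}) to itself.

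For the nontrivial direction, I would let $\pi\in G^{\rm LP}$ and let $P$ denote its permutation matrix on $\mathbb{R}^{2^k}$. The feasible set $\{\mathbf{f}\ge\0:\mathbf{M}\mathbf{f}=\mathbf{J}\}$ contains the strictly positive point $(N/2^k)\mathbf{1}$, so its affine hull equals the full subspace $\{\mathbf{M}\mathbf{f}=\mathbf{J}\}$. Invariance of the feasible set under $P$ therefore implies $P$-invariance of this affine subspace, hence of $\ker\mathbf{M}$, and by orthogonality of the permutation matrix also of the row span of $\mathbf{M}$. Reading the rows of $\mathbf{M}$ as functions on $\{\pm1\}^k$, this row span is precisely the space $V$ of affine-linear functions $c+\sum_{j=1}^k a_jq_j$. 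The $q$-entry of $\mathbf{x}_i^{\top} P$ equals $\pi(q)_i$, so $P$-invariance says that for each $i$ the component map $g_i(q):=\pi(q)_i$ must lie in $V$.

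The crux of the proof is then the classification of $\pm1$-valued functions in $V$. Writing $g_i(q)=c+\sum_ja_jq_j$ and expanding the identity $g_i^2\equiv 1$ in the Walsh character basis $\{\prod_{j\in S}q_j:S\subseteq\{1,\ldots,k\}\}$ using $q_j^2=1$, a coefficient comparison yields $c^2+\sum_j a_j^2=1$, $ca_j=0$ for every $j$, and $a_ja_\ell=0$ for $j\ne\ell$. Either $c=\pm1$ with all $a_j=0$, making $g_i$ constant (impossible since $\pi$ is a bijection on a set of size $\ge 2$), or $c=0$ with exactly one $a_j=\epsilon_i\in\{\pm1\}$, giving $g_i(q)=\epsilon_i q_{\sigma(i)}$. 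Bijectivity of $\pi$ then forces $\sigma\in S_k$, so $\pi$ acts as a signed coordinate permutation and lies in the natural image of $S_2\wr S_k$. I expect the classification of $\pm1$-valued affine-linear functions to be the main content of the argument; the reduction to it is routine linear algebra, provided one checks that LP feasibility is enough to make the affine hull of the feasible set coincide with the full affine subspace cut out by $\mathbf{M}\mathbf{f}=\mathbf{J}$.
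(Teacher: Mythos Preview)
Your proof is correct and follows essentially the same strategy as the paper: both reduce to showing that any $\pi\in G^{\rm LP}$ preserves ${\rm Row}(\mathbf{M})$ (your affine-hull argument is exactly the paper's Lemma~\ref{thm:AutRowM}, using the same interior point $(N/2^k)\mathbf{1}$), and then classify the $\pm1$-valued vectors in that span. The only real difference is in how the classification is carried out---the paper subtracts selected pairs of rows of the full factorial design to force each coefficient into $\{0,\pm1\}$ and then invokes norm preservation via the Pythagorean theorem (Lemmas~\ref{lem:lam101}--\ref{lem:xisigi}), whereas you expand the identity $g_i^2\equiv1$ in the Walsh basis and read off $c a_j=0$, $a_ja_\ell=0$, $c^2+\sum a_j^2=1$ in one stroke; your squaring trick is a slightly cleaner packaging of the same linear-algebraic content.
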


\begin{thm}\label{thm:main}
 For an OA$(N,k,2,2)$ with   $k \geq 4$
\begin{equation*}
G^{\rm LP} \cong S_2^k \rtimes S_{k+1}.
\end{equation*}
\end{thm}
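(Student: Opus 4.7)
The plan proceeds by proving $|G^{\rm LP}|=2^k(k+1)!$ via two-sided containment, with the upper bound being the main obstacle and precisely where the hypothesis $k\ge 4$ enters. For the lower bound I would augment the natural embedding $S_2\wr S_k\hookrightarrow G^{\rm LP}$ from~\cite{Geyer2014} by introducing, for each $i\in\{1,\ldots,k\}$, the involution $\pi_{0i}:\{-1,+1\}^k\to\{-1,+1\}^k$ defined by $\pi_{0i}(y_1,\ldots,y_k)=(y_1y_i,\ldots,y_{i-1}y_i,y_i,y_{i+1}y_i,\ldots,y_ky_i)$. A direct Hadamard-product calculation shows that, as a column permutation of $\M$, $\pi_{0i}$ interchanges the rows $\mathbf{x}_j^\top$ and $\mathbf{x}_{i,j}^\top$ for each $j\ne i$ while fixing $\mathbf{x}_i^\top$ and every $\mathbf{x}_{j,l}^\top$ with $j,l\ne i$, so $\pi_{0i}\in G^{\rm LP}$. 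Under the bijection $\mathbf{x}_j\leftrightarrow\{0,j\}$, $\mathbf{x}_{i,j}\leftrightarrow\{i,j\}$ between the non-$\mathbf{1}$ rows of $\M$ and $\binom{\{0,1,\ldots,k\}}{2}$, the permutation $\pi_{0i}$ realizes the transposition $(0,i)$ in the natural $S_{k+1}$-action on $2$-subsets, and the $S_k$ factor of $S_2\wr S_k$ corresponds to the stabilizer of $0$. Together with the normal subgroup $S_2^k$ of level flips these generate a subgroup $H\le G^{\rm LP}$ with $H\cong S_2^k\rtimes S_{k+1}$ and $|H|=2^k(k+1)!$.

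For the upper bound I would first reduce to showing that every $\pi\in G^{\rm LP}$ lies in $\mathrm{AGL}(k,2)=\mathbb{F}_2^k\rtimes\mathrm{GL}(k,2)$ after identifying $\{-1,+1\}^k$ with $\mathbb{F}_2^k$; equivalently, $\pi$ sends characters to $\pm$characters. This reduction is the main obstacle. The vectors $\mathbf{v}_i:=\pi(\mathbf{x}_i)$ are $\pm 1$ vectors in the span of $\{\mathbf{x}_j\}\cup\{\mathbf{x}_{j,l}\}$, and because column permutations commute with the Hadamard product, so are their pairwise products $\mathbf{v}_i\odot\mathbf{v}_j=\pi(\mathbf{x}_{i,j})$. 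Expanding in the character basis and demanding that $\mathbf{v}_i\odot\mathbf{v}_j$ have no Fourier coefficient at $\mathbf{x}_m$ for $|m|\in\{0,3,4\}$ yields pairwise orthogonality together with a family of Pl\"ucker-like quadratic relations indexed by $3$- and $4$-element subsets of $\{0,1,\ldots,k\}$. For $k=3$ these relations admit genuinely non-character solutions supported on $4$-cycles (e.g.\ $(\mathbf{x}_1-\mathbf{x}_3+\mathbf{x}_{1,2}+\mathbf{x}_{2,3})/2$), which is why the conclusion fails at $k=3$; but for $k\ge 4$ a case analysis on the Fourier supports $\mathrm{supp}(\mathbf{v}_i)$, viewed as edge sets of graphs on $\{0,\ldots,k\}$, shows that the only compatible $k$-tuples have $\mathbf{v}_i=\pm\mathbf{x}_{l(i)}$ for $2$-subsets $l(i)$ pairwise meeting in exactly one element, and a combinatorial argument (any three such non-concurrent $2$-subsets form a triangle that no fourth $2$-subset can extend) forces the $l(i)$'s to share a common element.

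Once $\pi\in\mathrm{AGL}(k,2)$ is established, the classification becomes elementary. Translations contribute $2^k$ independently, so the remaining count reduces to linear maps $A\in\mathrm{GL}(k,2)$ for which $A^{-\top}$ sends weight-$\{1,2\}$ vectors to weight-$\{1,2\}$ vectors; equivalently, the $k$ columns of $A^{-\top}$ have weight in $\{1,2\}$ and all pairwise sums have weight in $\{1,2\}$. A short case analysis shows that either all columns are standard basis vectors (yielding the $k!$ permutation matrices) or exactly one column equals some $e_a$ and the remaining $k-1$ columns are $\{e_a+e_b:b\ne a\}$ in some order (yielding $k\cdot k!$ additional matrices). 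Hence the contributing linear part has order $(k+1)!$, giving $|G^{\rm LP}|\le 2^k(k+1)!=|H|$ and therefore $G^{\rm LP}=H\cong S_2^k\rtimes S_{k+1}$.
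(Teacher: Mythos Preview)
Your proposal is correct and shares the paper's overall architecture: construct an explicit copy of $S_2^k\rtimes S_{k+1}$ inside $G^{\rm LP}$, then bound $|G^{\rm LP}|\le 2^k(k+1)!$ by showing that for $k\ge 4$ every $\pi\in G^{\rm LP}$ must send each $\mathbf{x}_i$ to $\pm\mathbf{x}_l$ for some $l$ of weight at most~$2$, and finally count the admissible assignments. Your involutions $\pi_{0i}$ are exactly the paper's generators $\rho_i$, and your identification of the non-$\mathbf{1}$ rows of $\M$ with $\binom{\{0,\ldots,k\}}{2}$ is the same bijection that underlies the paper's coset argument that $R\cong S_{k+1}$.

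Where you genuinely diverge is in packaging and in the final count. The paper obtains $\lambda_i\in\{0,\pm\tfrac12,\pm1\}$ by elementary row manipulations, invokes an R-coded exhaustive check (Lemma~\ref{lem:finform}) to pin down the residual ``half-integer'' form $\pm\tfrac12(\mathbf{x}_{a,b}+\mathbf{x}_{a,c}+\mathbf{x}_b+\mathbf{x}_c)$, and then uses several ad hoc Hadamard-product arguments (Lemmas~\ref{lem:cannotbe}--\ref{lem:almost}) to rule that form out when $k\ge 4$; the count $2^k(k+1)!$ then comes from a four-case enumeration on whether $g(\mathbf{x}_1),g(\mathbf{x}_2)$ are main effects or two-factor interactions (Lemma~\ref{lem:closingthegap}). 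You instead pass through $\mathrm{AGL}(k,2)$: once each $\mathbf{x}_i$ maps to a signed character, the sign pattern is forced to be a translation and the index map is forced to be linear over $\mathbb{F}_2$, so the count reduces to enumerating $A\in\mathrm{GL}(k,2)$ whose columns and pairwise column sums have weight at most~$2$. Your star-versus-triangle analysis of these columns yields $(k+1)!$ directly and replaces the paper's four-case computation with a uniform combinatorial argument. Both approaches leave the same step as the real work (excluding the half-integer solutions for $k\ge4$), and both locate the $k=3$ failure there; your Fourier-support/graph framing is more conceptual, while the paper's version is more explicit but leans on computer verification.
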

The proof of Theorem \ref{thm:main} requires more work than the proof of Theorem \ref{thm:easy}.
The following definition and lemma are necessary to prove both theorems.
\begin{defn}
Let $\mathbf{M}$ be a matrix and ${\rm Row}(\mathbf{M})$ be the row space of $\mathbf{M}$. Then the set of all invertible linear transformations $\mathbf{T}$ such that $\mathbf{T}(\mathbf{v})\in {\rm Row}(\mathbf{M})$ for all $\mathbf{v}\in {\rm Row}(\mathbf{M})$ is called the {\textit automorphism group} of ${\rm Row}(\mathbf{M})$ and denoted by ${\rm Aut}({\rm Row}(\mathbf{M}))$.
\end{defn}
\begin{lem}\label{thm:AutRowM}
Let $\mathbf{M}$ and $\mathbf{J}$ be as in equations  (\ref{eqn:M}) and (\ref{eqn:J})  respectively. Then $$G^{\rm LP}={\rm Aut}({\rm Row}(\mathbf{M})) \cap S_{2^k}.$$  
\end{lem}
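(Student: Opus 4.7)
The plan is to prove the two inclusions $G^{\rm LP} \subseteq {\rm Aut}({\rm Row}(\mathbf{M})) \cap S_{2^k}$ and ${\rm Aut}({\rm Row}(\mathbf{M})) \cap S_{2^k} \subseteq G^{\rm LP}$ separately, with the uniform point $\mathbf{f}^{\ast} := (N/2^k)\mathbf{1}$ as a pivot. Since ILP (\ref{eqn:Mf}) has the zero objective and nonnegativity is preserved by every coordinate permutation, $\pi \in G^{\rm LP}$ if and only if $\mathbf{M} P_\pi \mathbf{f} = \mathbf{J}$ for every $\mathbf{f} \in \mathcal{F}^{LP}$, where $P_\pi$ denotes the permutation matrix of $\pi$. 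First I would check that $\mathbf{f}^{\ast}$ is strictly positive and LP-feasible: the first row of $\mathbf{M}$ is $\mathbf{1}^{\top}$ so its dot product with $\mathbf{f}^{\ast}$ equals $N$, and every other row $\mathbf{x}_l^{\top}$ (with $\emptyset \neq l \subseteq \mathbb{Z}_k$) has equal numbers of $+1$'s and $-1$'s and is therefore orthogonal to $\mathbf{1}$. Crucially, $P_\pi \mathbf{f}^{\ast} = \mathbf{f}^{\ast}$ for every $\pi \in S_{2^k}$.

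For the forward inclusion, assume $\pi \in G^{\rm LP}$ and pick $\mathbf{v} \in \ker(\mathbf{M})$. Strict positivity of $\mathbf{f}^{\ast}$ lets me choose $\epsilon > 0$ so small that $\mathbf{f}^{\ast} + \epsilon \mathbf{v} \geq \mathbf{0}$, and then $\mathbf{f}^{\ast} + \epsilon \mathbf{v} \in \mathcal{F}^{LP}$. Applying $\pi$ and subtracting $\mathbf{M} P_\pi \mathbf{f}^{\ast} = \mathbf{M} \mathbf{f}^{\ast} = \mathbf{J}$ yields $\mathbf{M} P_\pi \mathbf{v} = \mathbf{0}$, so $\ker(\mathbf{M}) \subseteq \ker(\mathbf{M} P_\pi)$. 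Invertibility of $P_\pi$ forces these kernels to have the same dimension and hence to coincide, so their orthogonal complements agree: ${\rm Row}(\mathbf{M}) = {\rm Row}(\mathbf{M} P_\pi)$. This is exactly the condition that the coordinate permutation induced by $\pi$ preserves ${\rm Row}(\mathbf{M})$, i.e., $\pi \in {\rm Aut}({\rm Row}(\mathbf{M}))$.

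For the reverse inclusion, assume $\pi \in {\rm Aut}({\rm Row}(\mathbf{M}))$. Then the rows of $\mathbf{M} P_\pi$ lie in ${\rm Row}(\mathbf{M})$, so $\mathbf{M} P_\pi = \mathbf{U} \mathbf{M}$ for some matrix $\mathbf{U}$ (uniquely determined, as the rows of $\mathbf{M}$ are distinct nonzero characters of $(\mathbb{Z}_2)^k$ and therefore linearly independent). Evaluating this identity at $\mathbf{f}^{\ast}$, which $P_\pi$ fixes, gives $\mathbf{U} \mathbf{J} = \mathbf{U} \mathbf{M} \mathbf{f}^{\ast} = \mathbf{M} P_\pi \mathbf{f}^{\ast} = \mathbf{M} \mathbf{f}^{\ast} = \mathbf{J}$. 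Consequently, for any $\mathbf{f} \in \mathcal{F}^{LP}$, $\mathbf{M}(P_\pi \mathbf{f}) = \mathbf{U} \mathbf{M} \mathbf{f} = \mathbf{U} \mathbf{J} = \mathbf{J}$ and $P_\pi \mathbf{f} \geq \mathbf{0}$, so $P_\pi \mathbf{f} \in \mathcal{F}^{LP}$ and thus $\pi \in G^{\rm LP}$. The main obstacle is the forward direction, in which $\mathcal{F}^{LP}$ must affinely span the entire solution subspace $\{\mathbf{M}\mathbf{f} = \mathbf{J}\}$ so that the action of $P_\pi$ on $\ker(\mathbf{M})$ can be read off from just the LP-feasible points; the strictly positive, $S_{2^k}$-invariant pivot $\mathbf{f}^{\ast}$ is exactly what makes this bootstrap go through.
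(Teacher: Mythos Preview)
Your proof is correct and follows essentially the same approach as the paper: both pivot on the strictly positive, permutation-invariant point $\mathbf{f}^{\ast}=(N/2^k)\mathbf{1}$ to reduce the question to whether $\pi$ preserves $\ker(\mathbf{M})$ (equivalently ${\rm Row}(\mathbf{M})$). Your forward direction is in fact slightly more careful than the paper's, since you make the $\epsilon$-scaling explicit to pass from LP-feasible perturbations to all of $\ker(\mathbf{M})$; the paper glosses over this step. In the reverse direction the paper argues directly via $\mathbf{M}h(\mathbf{f}')=\mathbf{0}$ for $h$ preserving the null space, whereas you use the factorization $\mathbf{M}P_\pi=\mathbf{U}\mathbf{M}$ and evaluate at $\mathbf{f}^{\ast}$ to obtain $\mathbf{U}\mathbf{J}=\mathbf{J}$; these are equivalent repackagings of the same observation.
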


\begin{proof} Let LP (\ref{eqn:Mf}) be the LP relaxation of ILP (\ref{eqn:Mf}).
Observe that
\begin{equation*}
\mathbf{f}^* = \begin{bmatrix}
\frac{N}{2^k} \\
\vdots \\
\frac{N}{2^k} \end{bmatrix}
\end{equation*}
is a particular solution to LP (\ref{eqn:Mf}).  Hence, every solution  $\mathbf{f}$ to the LP (\ref{eqn:Mf})  can be written in the form  $\mathbf{f}^* + \mathbf{f}'$,  where  $\mathbf{f}'$ comes from the null space of $\mathbf{M}$.  Let  $g \in G^{\rm LP}$  be arbitrary.  Then  $g(\mathbf{f})$  is a solution to LP (\ref{eqn:Mf}).  That is,
\begin{equation*}
\mathbf{M} g(\mathbf{f}) = \mathbf{M} g(\mathbf{f}^* + \mathbf{f}')=\mathbf{J}.
\end{equation*}
Because  $g \in G^{\rm LP} \leq S_{2^k}$,
\begin{equation*}
\mathbf{M} [g(\mathbf{f}^*) + g(\mathbf{f}')] = \mathbf{M} [\mathbf{f}^* + g(\mathbf{f}')] = \mathbf{J},
\end{equation*}
and thus
\begin{equation*}
\mathbf{M} \mathbf{f}^* + \mathbf{M} g(\mathbf{f}') = \mathbf{J} + \mathbf{M} g(\mathbf{f}') = \mathbf{J}.
\end{equation*}
Therefore,
\begin{equation*}
\mathbf{M} g(\mathbf{f}') = \mathbf{0},
\end{equation*}
so we see that  $g(\mathbf{f}') \in {\rm Null}(\mathbf{M})$  which means  $g$  must preserve  ${\rm Null}(\mathbf{M})$.  Because  $g \in G^{\rm LP}$  is arbitrary,   $G^{\rm LP} \leq {\rm Aut}({\rm Null}(\mathbf{M})) \cap S_{2^k}$. \par
Now let  $h \in {\rm Aut}({\rm Null}(\mathbf{M})) \cap S_{2^k}$  be arbitrary.  Then
\begin{eqnarray*}
\mathbf{M} h(\mathbf{f}) & = & \mathbf{M} h(\mathbf{f}^* + \mathbf{f}') \\
& = & \mathbf{M} \mathbf{f}^* + \mathbf{M} h(\mathbf{f}') \\
& = & \mathbf{J}.
\end{eqnarray*}
Also, $$\mathbf{f}\geq\0 \quad \Rightarrow \quad h(\mathbf{f})\geq\0.$$
Hence,  $h \in G^{\rm LP}$, and because  $h$  is arbitrary,  ${\rm Aut}({\rm Null}(\mathbf{M})) \cap S_{2^k} \leq G^{\rm LP}$.  As  ${\rm Aut}({\rm Null}(\mathbf{M})) = {\rm Aut}({\rm Row}(\mathbf{M}))$, we conclude that  $G^{\rm LP} = {\rm Aut}({\rm Null}(\mathbf{M})) \cap S_{2^k} = {\rm Aut}({\rm Row}(\mathbf{M})) \cap S_{2^k}$.
\end{proof}

\section{The proof of the strength one case}

Throughout this section we assume that $t=1$  in ILP (\ref{eqn:Mf}).  Let
$\mathcal{B} = \{\mathbf{1},  \mathbf{x}_1, \ldots, \mathbf{x}_k\}.$
Then $\mathcal{B}$  is an orthogonal basis for  ${\rm Row}(\mathbf{M})$.  For all  $g \in G^{\rm LP}$,   $g(\mathcal{B})$  must also be an orthogonal basis for  ${\rm Row}(\mathbf{M})$  because,  $g \in S_{2^k}$, and elements of  $S_{2^k}$  preserve angles.  Furthermore, for every  $\mathbf{x} \in \mathcal{B}$, $g(\mathbf{x})$  can be written uniquely as a linear combination of the elements of  $\mathcal{B}$.  That is,
\begin{equation}\label{eqn:glincomb}
g(\mathbf{x}) = \lambda_0 \mathbf{1} + \lambda_1 \mathbf{x}_1 + \cdots + \lambda_k \mathbf{x}_k.
\end{equation}

\begin{lem}\label{lem:lam101}
Let  $\mathbf{x} \in \mathcal{B}$.  If  $\mathbf{x} = \mathbf{1}$  in (\ref{eqn:glincomb}), then  $\lambda_0 = 1$, and  $\lambda_i = 0$  for  $i = 1, \ldots, k$.  Otherwise,  $\lambda_0 = 0$.
\end{lem}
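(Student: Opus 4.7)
The plan is to exploit two elementary facts: $g$ is a permutation matrix, hence preserves inner products; and $\mathcal{B}$ is an orthogonal basis of $\mathrm{Row}(\mathbf{M})$, so the coefficients in the expansion (\ref{eqn:glincomb}) can be recovered by orthogonal projection.

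First I would handle the case $\mathbf{x}=\mathbf{1}$. Since every entry of $\mathbf{1}$ equals $1$, any coordinate permutation fixes it: $g(\mathbf{1})=\mathbf{1}$. By uniqueness of the expansion in $\mathcal{B}$ this forces $\lambda_0=1$ and $\lambda_i=0$ for $i=1,\dots,k$.

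Next I would handle the case $\mathbf{x}=\mathbf{x}_i$ for some $i\in\{1,\dots,k\}$. The key observation is that the full factorial columns $\mathbf{x}_1,\dots,\mathbf{x}_k$ are each balanced in $\pm1$, so $\mathbf{1}^{\top}\mathbf{x}_i=0$, and because $g\in S_{2^k}$ acts as a permutation matrix it satisfies
\begin{equation*}
\mathbf{1}^{\top} g(\mathbf{x}_i)\;=\;g^{-1}(\mathbf{1})^{\top}\mathbf{x}_i\;=\;\mathbf{1}^{\top}\mathbf{x}_i\;=\;0.
\end{equation*}
Taking the inner product of both sides of (\ref{eqn:glincomb}) with $\mathbf{1}$ and using the orthogonality of $\mathcal{B}$ (so that $\mathbf{1}^{\top}\mathbf{x}_j=0$ for every $j$) then yields $0=\lambda_0\,\mathbf{1}^{\top}\mathbf{1}=\lambda_0\cdot 2^k$, forcing $\lambda_0=0$.

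No serious obstacle is anticipated here; the lemma is purely a statement that the ``constant component'' of $g(\mathbf{x})$ is determined by the sum of the entries of $\mathbf{x}$, which a permutation obviously preserves. The real content of the strength-$1$ theorem will come in subsequent lemmas, where one must show that the remaining coefficients $\lambda_1,\dots,\lambda_k$ for $\mathbf{x}=\mathbf{x}_i$ select a single basis vector up to sign; this step will again use that $g(\mathbf{x}_i)$ has entries in $\{\pm1\}$ together with the orthogonality of $\mathcal{B}$.
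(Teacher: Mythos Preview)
Your proof is correct and essentially identical to the paper's: both argue that $g(\mathbf{1})=\mathbf{1}$ by permutation invariance, and that $\lambda_0=0$ for $\mathbf{x}\neq\mathbf{1}$ because permutations preserve inner products so $g(\mathbf{x}_i)\perp g(\mathbf{1})=\mathbf{1}$. The paper phrases the second step as ``$g(\mathbf{x}_i)$ must be orthogonal to $g(\mathbf{1})=\mathbf{1}$,'' while you unpack this same orthogonality via $\mathbf{1}^{\top}g(\mathbf{x}_i)=g^{-1}(\mathbf{1})^{\top}\mathbf{x}_i=0$; the content is the same.
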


\begin{proof}
Suppose  $\mathbf{x} = \mathbf{1}$.  Because  $g \in G^{\rm LP} \leq S_{2^k}$,  $g(\mathbf{1}) = \mathbf{1}$  which uniquely satisfies (\ref{eqn:glincomb}).  For  $i = 1, \ldots, k$, $g(\mathbf{x}_i)$  must be orthogonal to  $g(\mathbf{1}) = \mathbf{1}$, so  $\lambda_0 = 0$  whenever  $\mathbf{x} \neq \mathbf{1}$.
\end{proof}

\begin{lem}\label{lem:xisigi}
Let $\{\mathbf{x}_1, \ldots, \mathbf{x}_k\}$ be the columns of a full factorial  $2^k$  design appearing in $\mathcal{B}$. If  $\{\mathbf{x}_1', \ldots, \mathbf{x}_k'\}$  is obtained from   $\{\mathbf{x}_1, \ldots, \mathbf{x}_k\}$, by permuting rows, then there exists a permutation  $\sigma \in S_k$  such that for all  $i \in \{1, \ldots, k\}$  satisfying  $\mathbf{x}_i' \in span(\mathbf{x}_1, \ldots, \mathbf{x}_k)$,  $\mathbf{x}_i' = \pm\mathbf{x}_{\sigma(i)}$.
\end{lem}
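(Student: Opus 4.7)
The plan breaks into two steps. First, I characterize which $\pm 1$-valued vectors can lie in $\operatorname{span}(\mathbf{x}_1,\ldots,\mathbf{x}_k)$: I claim every such vector is $\pm\mathbf{x}_j$ for a unique $j$, which gives the index $\sigma(i) := j$. Second, I verify that the resulting partial map is injective on its domain and hence extends to a permutation in $S_k$, which is all the lemma asks for.

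For the first (main) step, I would expand $\mathbf{x}_i' = \sum_{j=1}^{k}\lambda_j\mathbf{x}_j$. The $\mathbf{x}_j$ are pairwise orthogonal with squared norm $2^k$, and $\|\mathbf{x}_i'\|^2 = 2^k$ as well, so Parseval immediately forces $\sum_j\lambda_j^2 = 1$. I would then upgrade this to integrality by exploiting that every entry of $\mathbf{x}_i'$ is $\pm 1$. Since the $\mathbf{x}_j$ are columns of a full factorial, the row in which all of $\mathbf{x}_1,\ldots,\mathbf{x}_k$ take value $+1$ is present, yielding $\sum_j\lambda_j = \pm 1$; likewise the row in which only $\mathbf{x}_i$ takes value $-1$ yields $\sum_j\lambda_j - 2\lambda_i = \pm 1$. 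Subtracting gives $2\lambda_i \in \{-2,0,2\}$, so each $\lambda_i \in \{-1,0,1\}$. Combined with $\sum_j\lambda_j^2 = 1$, exactly one $\lambda_j$ is $\pm 1$ and the rest vanish, pinning down $\sigma(i)$.

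For the second step, I would argue that $\{\mathbf{x}_1',\ldots,\mathbf{x}_k'\}$, being a row-permutation of the full factorial columns, itself enumerates every element of $\{-1,+1\}^k$ across its rows. Hence no two $\mathbf{x}_i'$ are equal or negatives of each other: any such coincidence would force two coordinates of every row either to agree or to disagree, eliminating some $\pm 1$ pattern. Thus $\sigma(i) \neq \sigma(i')$ whenever $i \neq i'$ both lie in the domain, and $\sigma$ extends arbitrarily to a full permutation in $S_k$.

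I do not anticipate a serious obstacle. In essence, the argument is the discrete Fourier observation that a $\pm 1$-valued function on $\mathbb{F}_2^k$ whose Walsh--Hadamard spectrum is supported on the weight-one characters is, up to sign, one of those characters. The only judgment required is choosing the right rows at which to evaluate $\mathbf{x}_i'$ so that the $L^2$-constraint $\sum\lambda_j^2 = 1$ can be sharpened to $\lambda_j \in \{-1,0,1\}$.
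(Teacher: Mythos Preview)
Your proposal is correct and follows essentially the same approach as the paper: both arguments expand $\mathbf{x}_i'$ in the orthogonal basis $\{\mathbf{x}_1,\ldots,\mathbf{x}_k\}$, subtract two well-chosen $\pm 1$ equations to force each $\lambda_j\in\{-1,0,1\}$, and then invoke the norm identity $\sum_j\lambda_j^2=1$ to conclude that exactly one coefficient survives. The only minor difference is in the injectivity step, where the paper uses that row permutations preserve orthogonality (so distinct $\mathbf{x}_i'$ must land on distinct $\pm\mathbf{x}_j$), while you argue combinatorially that the permuted columns still enumerate all of $\{-1,+1\}^k$ across rows and hence no two can be equal or negatives of each other; both are valid one-line observations.
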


\begin{proof}
  Suppose  $\mathbf{x}_i' \in span(\mathbf{x}_1, \ldots, \mathbf{x}_k)$.  Then  $\mathbf{x}_i' = \lambda_1 \mathbf{x}_1 + \cdots + \lambda_k \mathbf{x}_k$, and we have the system of equations
\begin{equation*}
\begin{array}{cccccc}
\lambda_1 & + & \cdots & + \lambda_k & = & \pm1 \\
\lambda_1 & + & \cdots & - \lambda_k & = & \pm1 \\
\vdots &   & \ddots & \vdots &   & \vdots \\
-\lambda_1 & - & \cdots & - \lambda_k & = & \pm1. \end{array}
\end{equation*}
Subtracting the second equation from the first equation gives  $\lambda_k \in \{0, \pm1\}$.  Choosing other pairs of equations similarly yields  $\lambda_j \in \{0, \pm1\}$  for  $j = 1, \ldots, k$.  Because  $\mathcal{B}$  is an orthogonal set, the Pythagorean theorem gives
\[
\|\mathbf{x}_i'\|^2  =  \sum_{j=1}^k \|\lambda_j \mathbf{x}_j\|^2 = \sum_{j=1}^k \lambda_j^2 \|\mathbf{x}_j\|^2.\]
Since row permutations are norm-preserving,  $\|\mathbf{x}_i'\|^2 = 2^k = \|\mathbf{x}_j\|^2$  for  $j = 1, \ldots, k$.  Thus,
$\sum_{j=1}^k \lambda_j^2 = 1.$
Since $\lambda_j \in \{0, \pm1\}$  for  $j = 1, \ldots, k$, there is exactly one nonzero  $\lambda_j \in \{\pm1\}$, and  $\mathbf{x}_i' = \pm\mathbf{x}_j$. Row permutations also preserve orthogonality, so for every distinct  $i \in \{1, \ldots, k\}$  such that  $\mathbf{x}_i' \in span(\mathbf{x}_1, \ldots, \mathbf{x}_k)$, there is a unique  $j \in \{1, \ldots, k\}$  satisfying  $\mathbf{x}_i' = \pm\mathbf{x}_j$.  Thus, there exists a permutation  $\sigma \in S_k$  such that  $\mathbf{x}_i' = \pm\mathbf{x}_{\sigma(i)}$  for all  $i \in \{1, \ldots, k\}$  such that  $\mathbf{x}_i' \in span(\mathbf{x}_1, \ldots, \mathbf{x}_k)$.
\end{proof}

\begin{lem}\label{lem:ubnd1G}
\begin{equation*}
|G^{\rm LP}| \leq 2^k k!.
\end{equation*}
\end{lem}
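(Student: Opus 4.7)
The plan is to use Lemma \ref{thm:AutRowM} together with the two preceding lemmas of this section to read off the action of an arbitrary $g \in G^{\rm LP}$ on the basis $\mathcal{B}$, and then argue that this action already pins down $g$ as a permutation of the $2^k$ rows. First I would invoke Lemma \ref{thm:AutRowM} to note that every $g \in G^{\rm LP}$ preserves $\mathrm{Row}(\mathbf{M})$; since $g \in S_{2^k}$ preserves inner products, $g(\mathcal{B})$ is another orthogonal basis of $\mathrm{Row}(\mathbf{M})$.

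Next, applying Lemma \ref{lem:lam101} to each $\mathbf{x} \in \mathcal{B}$ gives $g(\mathbf{1}) = \mathbf{1}$ and places each $g(\mathbf{x}_i)$ in $\mathrm{span}(\mathbf{x}_1, \ldots, \mathbf{x}_k)$. Because $g$ is a row permutation, $\{g(\mathbf{x}_1), \ldots, g(\mathbf{x}_k)\}$ is obtained from $\{\mathbf{x}_1, \ldots, \mathbf{x}_k\}$ by permuting rows, so Lemma \ref{lem:xisigi} furnishes a permutation $\sigma \in S_k$ and signs $\epsilon_1, \ldots, \epsilon_k \in \{\pm 1\}$ such that $g(\mathbf{x}_i) = \epsilon_i \mathbf{x}_{\sigma(i)}$ for every $i$. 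This assembles into a well-defined map $\Phi : G^{\rm LP} \to S_k \times \{\pm 1\}^k$ sending $g$ to $(\sigma, (\epsilon_1, \ldots, \epsilon_k))$.

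The remaining step is to verify that $\Phi$ is injective, after which $|G^{\rm LP}| \leq |S_k \times \{\pm 1\}^k| = 2^k k!$ is immediate. The crucial observation is that the $2^k$ rows of the matrix $[\mathbf{x}_1, \ldots, \mathbf{x}_k]$ are, by construction, precisely the distinct elements of $\{\pm 1\}^k$, so this matrix has no repeated rows. If $\Phi(g) = \Phi(h)$, then $g$ and $h$ produce the same column-wise image of $[\mathbf{x}_1, \ldots, \mathbf{x}_k]$, and row-distinctness forces $g = h$ since a permutation of a list of pairwise distinct vectors is determined by the resulting list. I do not anticipate a serious obstacle here: Lemmas \ref{lem:lam101} and \ref{lem:xisigi} have already done the heavy lifting, and the only care needed is to articulate that row-distinctness of the full factorial is what makes $\Phi$ injective.
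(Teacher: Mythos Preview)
Your proof is correct and follows essentially the same line as the paper: invoke Lemmas \ref{lem:lam101} and \ref{lem:xisigi} to see that each $g\in G^{\rm LP}$ acts on the main effects as a signed permutation, and then count. The paper leaves implicit the step you spell out---that row-distinctness of the full factorial makes the map $g\mapsto(\sigma,(\epsilon_1,\ldots,\epsilon_k))$ injective---so your version is slightly more explicit but not a different approach.
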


\begin{proof}
Let  $g \in G^{\rm LP}$  be arbitrary.  From Lemma \ref{lem:lam101}, we have that  $g(\mathbf{1}) = \mathbf{1}$, and  $g(\mathbf{x}_i) = \lambda_1 \mathbf{x}_1 + \cdots + \lambda_k \mathbf{x}_k$  for  $i = 1, \ldots, k$.  Now by Lemma \ref{lem:xisigi},  there exists a permutation  $\sigma \in S_k$  such that  $\mathbf{x}_i' = \pm\mathbf{x}_{\sigma(i)}$  for  $i = 1, \ldots, k$.  That is,  $g$  is essentially a signed permutation of the  $k$  main effects, so  $g$  is one of at most  $2^k k!$  elements in  $G^{\rm LP}$.
\end{proof}

Next, by finding a subgroup of  $G^{\rm LP}$  that attains the upper bound on size, and using the semidirect product and the wreath product  we determine the size and the structure of  $G^{\rm LP}$.   For the definitions of the semidirect product, the wreath product, and the base of a wreath product see Rotman~\cite{Rotman1994}.

In the factorial design setting,  $S_k$  is the permutation group of  $k$  factors.  The multiplicative group  $\{\pm1\}$  that multiplies a column is isomorphic to  $S_2$.  Naturally,  $S_2^k$  is the direct product of  $k$  copies of  $S_2$.  We now see that  $S_2^k$  is the base of  $S_2 \wr S_k = S_2^k \rtimes S_k$ and
 $S_2^k \trianglelefteq S_2 \wr S_k$.

This wreath product is the set of all signed permutations of  $\mathbf{x}_i$  for  $i = 1, \ldots, k$  from the full factorial  $2^k$  design, where  $\mathbf{x}_1^{\top}, \ldots, \mathbf{x}_k^{\top}$  constitute rows of  $\mathbf{M}$.  We shall see that this group is a subgroup of  $G^{\rm LP}$.  Hence, by Lemma \ref{lem:ubnd1G}, it is isomorphic to $G^{\rm LP}$.

\begin{proof}[{\bf Proof of Theorem \ref{thm:easy}}]
An arbitrary element of  $S_2 \wr S_k$ can be written in the form  $\phi \sigma$  where  $\phi \in S_2^k$  and  $\sigma \in S_k$.   Clearly, permuting the  $k$  rows  $\mathbf{x}_1^{\top}, \ldots, \mathbf{x}_k^{\top}$  of  $\mathbf{M}$ or  negating any subset  of these  $k$  rows will preserve the full factorial  $2^k$  design. Hence,  $\phi \sigma \in S_{2^k}$.  Furthermore, the signed permutation  $\phi \sigma$  preserves  ${\rm Row}(\mathbf{M})$, so  $\phi \sigma \in {\rm Aut}({\rm Row}(\mathbf{M}))$.  Because  $\phi \sigma \in S_2 \wr S_k$  was arbitrary, we get that an isomorphic copy of $S_2 \wr S_k$ is contained in $G^{\rm LP} = {\rm Aut}({\rm Row}(\mathbf{M})) \cap S_{2^k}$.  Finally,  $|S_2 \wr S_k| = |S_2^k| |S_k| = 2^k k!$  is the upper bound for  $|G^{\rm LP}|$, so  $G^{\rm LP}$  must be isomorphic to  $S_2 \wr S_k$.
\end{proof}

\section{The proof of the strength two case}

Throughout this section, we assume that $t=2$ in ILP (\ref{eqn:Mf}).  Let
$\mathcal{B} = \{\mathbf{1},  \mathbf{x}_1, \ldots, \mathbf{x}_k, \mathbf{x}_{1, 2}, \ldots, \mathbf{x}_{{k - 1}, k}\}.$
Then  $\mathcal{B}$  is an orthogonal basis for  ${\rm Row}(\mathbf{M})$, and for all  $g \in G^{\rm LP}\subseteq S_{2^k}$ the set $g(\mathcal{B})$  must also be an orthogonal basis for  ${\rm Row}(\mathbf{M})$.  Also, for every  $\mathbf{x} \in \mathcal{B}$,  $g(\mathbf{x})$  can be written uniquely as a linear combination of the elements of  $\mathcal{B}$.  In this case,
\begin{equation}\label{eqn:glincomb2}
g(\mathbf{x}) =
\lambda_0 \mathbf{1} + \lambda_1 \mathbf{x}_1 + \cdots + \lambda_k \mathbf{x}_k + \lambda_{1, 2} \mathbf{x}_{1, 2} + \cdots + \lambda_{k - 1, k} \mathbf{x}_{k - 1, k}.
\end{equation}
Similar to the strength one case above, we will arrive at the conclusion that for any  $\mathbf{x} \in \mathcal{B}$, every  $\lambda$  in (\ref{eqn:glincomb2}) must be zero except for one, which must have an absolute value of one.  The following several lemmas serve to lead us to this conclusion.

\begin{lem}\label{lem:lam1}
Let  $\mathbf{x} \in \mathcal{B}$.  If  $\mathbf{x} = \mathbf{1}$  in (\ref{eqn:glincomb2}), then  $\lambda_0 = 1$, and  $\lambda_i = 0$  for  $i = 1, \ldots, k, (1, 2), \ldots, (k - 1, k)$.  Otherwise,  $\lambda_0 = 0$.
\end{lem}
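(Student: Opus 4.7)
The plan is to mimic the argument used for Lemma \ref{lem:lam101} in the strength one case, since the reasoning only uses the facts that $g$ is a permutation and that $\mathbf{1}$ is orthogonal to all the other basis elements in $\mathcal{B}$; neither fact depends on whether $\mathcal{B}$ contains two-factor interaction terms.

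First I would handle the case $\mathbf{x} = \mathbf{1}$. Since $g \in G^{\rm LP} \leq S_{2^k}$ acts by permuting the coordinates of $\mathbb{Q}^{2^k}$, and the all-ones vector is fixed by every coordinate permutation, we have $g(\mathbf{1}) = \mathbf{1}$. By the uniqueness of the expansion of $g(\mathbf{x})$ in the orthogonal basis $\mathcal{B}$ stated just before the lemma, matching $g(\mathbf{1}) = \mathbf{1}$ with the right-hand side of (\ref{eqn:glincomb2}) forces $\lambda_0 = 1$ and every other coefficient to be zero.

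Next I would handle the case $\mathbf{x} \neq \mathbf{1}$. By orthogonality of the basis $\mathcal{B}$, $\langle \mathbf{x}, \mathbf{1}\rangle = 0$. Since $g$ is a coordinate permutation it preserves the standard inner product, so $\langle g(\mathbf{x}), g(\mathbf{1})\rangle = 0$, and by the previous case $g(\mathbf{1}) = \mathbf{1}$, which yields $\langle g(\mathbf{x}), \mathbf{1}\rangle = 0$. Expanding $g(\mathbf{x})$ as in (\ref{eqn:glincomb2}) and using again that $\mathbf{1}$ is orthogonal to each $\mathbf{x}_i$ and each $\mathbf{x}_{i,j}$, only the $\lambda_0\mathbf{1}$ term contributes, so $\lambda_0 \|\mathbf{1}\|^2 = \lambda_0 \cdot 2^k = 0$, hence $\lambda_0 = 0$.

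There is no real obstacle here; the lemma is the direct analog of Lemma \ref{lem:lam101}, and the proof needs only two short observations (that permutations fix $\mathbf{1}$ and preserve the inner product). The work of the strength two case will come in the subsequent lemmas, which must rule out nonzero contributions from the interaction vectors $\mathbf{x}_{i,j}$ when $\mathbf{x}$ is a main effect, and vice versa.
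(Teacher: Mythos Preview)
Your proof is correct and follows essentially the same approach as the paper's own proof: both use that $g\in S_{2^k}$ fixes $\mathbf{1}$ to handle the first case, and that $g$ preserves orthogonality to $\mathbf{1}$ to force $\lambda_0=0$ in the second case. You merely spell out the inner-product computation a bit more explicitly than the paper does.
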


\begin{proof}
Suppose  $\mathbf{x} = \mathbf{1}$.  Because  $g \in G^{\rm LP} \leq S_{2^k}$,  $g(\mathbf{1}) = \mathbf{1}$  which uniquely satisfies (\ref{eqn:glincomb2}).  For  $i = 1, \ldots, k, (1, 2), \ldots, (k - 1, k)$,  $g(\mathbf{x}_i)$  must be orthogonal to  $g(\mathbf{1}) = \mathbf{1}$, so  $\lambda_0 = 0$  whenever  $\mathbf{x} \neq \mathbf{1}$.
\end{proof}

\begin{lem}\label{lem:0.51}
Let  $\mathbf{x} \in \mathcal{B}$.  If  $\mathbf{x} \neq \mathbf{1}$  in (\ref{eqn:glincomb2}), then  $\lambda_i \in \{0, \pm0.5, \pm1\}$  for  $i = 1, \ldots, k, (1, 2), \ldots, (k - 1, k)$.
\end{lem}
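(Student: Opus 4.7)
The plan is to reinterpret $f := g(\x)$ as a $\pm 1$-valued function on $\{0,1\}^k$ and apply discrete difference operators. Index the $2^k$ rows of the full factorial by $a = (a_1,\dots,a_k) \in \{0,1\}^k$, so that $(\x_i)_a = (-1)^{a_i}$ and $(\x_{i,j})_a = (-1)^{a_i+a_j}$; these are the Walsh characters $\chi_S$ indexed by $S \subseteq \{1,\dots,k\}$. Since $g \in S_{2^k}$ permutes the $\pm 1$ entries of $\x \in \mathcal{B}$, $f$ is $\pm 1$-valued, and by Lemma \ref{lem:lam1} we have the Fourier expansion $f = \sum_{i=1}^k \lambda_i \chi_{\{i\}} + \sum_{i < j} \lambda_{i,j} \chi_{\{i,j\}}$.

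For the two-factor coefficients I will apply the second-difference operator
\[
\Delta_{m,m'} f(a) := \tfrac14\bigl(f(a) - f(a \oplus e_m) - f(a \oplus e_{m'}) + f(a \oplus e_m \oplus e_{m'})\bigr).
\]
A direct calculation on characters shows $\Delta_{m,m'} \chi_S = \chi_S$ when $\{m,m'\} \subseteq S$ and $0$ otherwise. Under the support restriction $|S| \leq 2$, only $\lambda_{m,m'} \chi_{\{m,m'\}}$ survives. Because $f$ is $\pm 1$-valued, $\Delta_{m,m'} f(a)$ averages four $\pm 1$ numbers and hence lies in $\{0,\pm\tfrac12,\pm 1\}$; since $\chi_{\{m,m'\}}(a) = \pm 1$, this forces $\lambda_{m,m'} \in \{0,\pm\tfrac12,\pm 1\}$.

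The main-effect coefficients need one more step. The first-difference $\Delta_m f(a) := \tfrac12(f(a) - f(a \oplus e_m))$ picks out the characters containing $m$, giving $\Delta_m f = \lambda_m \chi_{\{m\}} + \sum_{j \neq m} \lambda_{\{m,j\}} \chi_{\{m,j\}}$, with entries in $\{0,\pm 1\}$. Using $\chi_{\{m,j\}} = \chi_{\{m\}}\chi_{\{j\}}$ and multiplying through by $\chi_{\{m\}}(a) = \pm 1$ rearranges this to
\[
\lambda_m + \sum_{j \neq m} \epsilon_j \lambda_{\{m,j\}} \in \{0,\pm 1\}, \qquad \epsilon_j := \chi_{\{j\}}(a).
\]
As $a$ varies over rows with $a_m$ fixed, $(\epsilon_j)_{j \neq m}$ realizes every sign pattern in $\{\pm 1\}^{k-1}$. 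Choosing $\epsilon_j = \mathrm{sign}(\lambda_{\{m,j\}})$ and then its coordinatewise negation gives the two constraints $\lambda_m \pm \sum_{j \neq m} |\lambda_{\{m,j\}}| \in \{0,\pm 1\}$; summing them collapses to $2\lambda_m \in \{-2,-1,0,1,2\}$, whence $\lambda_m \in \{0,\pm\tfrac12,\pm 1\}$.

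The principal bookkeeping items are verifying that the difference operators act diagonally on Walsh characters in the stated way, and confirming that the sign patterns $(\chi_{\{j\}}(a))_{j \neq m}$ can be realized independently as $a$ varies with $a_m$ fixed. Both reduce to elementary properties of characters on $\{0,1\}^k$ and present no genuine obstacle.
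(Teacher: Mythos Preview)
Your argument is correct. Both steps---the second-difference isolation of $\lambda_{m,m'}$ and the first-difference-plus-sign-choice isolation of $\lambda_m$---go through as written; the diagonal action of $\Delta_{m,m'}$ on Walsh characters and the free sign realization of $(\epsilon_j)_{j\neq m}$ are standard and pose no issue.

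The paper's proof reaches the same conclusion by a more ad~hoc route: it subtracts the rows of the linear system indexed by complementary points of the cube (e.g.\ $(0,\dots,0)$ and $(1,\dots,1)$, then $(0,1,\dots,1)$ and $(1,0,\dots,0)$) so that all two-factor interaction terms cancel, leaving constraints of the form $\lambda_1\pm\lambda_2\pm\cdots\pm\lambda_k\in\{0,\pm1\}$, and then adds a suitable pair of these to isolate a single $\lambda_i$. It then waves at ``choosing other sets of equations similarly'' for the remaining coefficients. Your Fourier/difference-operator formulation is more systematic: the second difference nails each $\lambda_{m,m'}$ in one shot (the paper never explicitly writes out this case), and your treatment of $\lambda_m$ makes transparent exactly which row combinations are being used and why they suffice. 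The paper's version is slightly more elementary in that it avoids introducing any operator language, but yours scales more cleanly and would adapt more readily to higher strength.
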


\begin{proof}
Suppose  $\mathbf{x} \neq \mathbf{1}$.  Then (\ref{eqn:glincomb2}) becomes  $g(\mathbf{x}) = \lambda_1 \mathbf{x}_1 + \cdots + \lambda_k \mathbf{x}_k + \lambda_{1, 2} \mathbf{x}_{1, 2} + \cdots + \lambda_{k - 1, k} \mathbf{x}_{k - 1, k}$.  Because these basis vectors are the columns of the full factorial  $2^k$  design and the corresponding $2$-factor interactions are obtained by taking the appropriate pairwise Hadamard products of the individual columns (main effects), we have the system of equations
\begin{equation*}
\begin{array}{ccccccccccccc}
\lambda_1 & + & \cdots & + \lambda_k & + \lambda_{1, 2} & + & \cdots & + \lambda_{1, k} & + & \cdots & + \lambda_{k - 1, k} & = & \pm1 \\
\vdots &   & \ddots & \vdots & \vdots &   & \ddots & \vdots &   & \ddots & \vdots &   & \vdots \\
\lambda_1 & - & \cdots & - \lambda_k & - \lambda_{1, 2} & - & \cdots & - \lambda_{1, k} & + & \cdots & + \lambda_{k - 1, k} & = & \pm1 \\
-\lambda_1 & + & \cdots & + \lambda_k & - \lambda_{1, 2} & - & \cdots & - \lambda_{1, k} & + & \cdots & + \lambda_{k - 1, k} & = & \pm1 \\
\vdots &   & \ddots & \vdots & \vdots &   & \ddots & \vdots &   & \ddots & \vdots &   & \vdots \\
-\lambda_1 & - & \cdots & - \lambda_k & + \lambda_{1, 2} & + & \cdots & + \lambda_{1, k} & + & \cdots & + \lambda_{k - 1, k} & = & \pm1. \end{array}
\end{equation*}
Subtracting the last equation from the first gives  $\lambda_1 + \cdots + \lambda_k \in \{0, \pm1\}$.  Taking the difference of the middle equations likewise provides  $\lambda_1 - \cdots - \lambda_k \in \{0, \pm1\}$.  Summing these two expressions results in the conclusion  $\lambda_1 \in \{0, \pm0.5, \pm1\}$.  Choosing other sets of equations similarly yields  $\lambda_i \in \{0, \pm0.5, \pm1\}$  for  $i = 1, \ldots, k, (1, 2), \ldots, (k - 1, k)$.
\end{proof}

\begin{lem}\label{lem:form}
Let  $\mathbf{x} \in \mathcal{B}$  and  $g \in G^{\rm LP}$ in (\ref{eqn:glincomb2}) such that  $\mathbf{x} \neq \mathbf{1}$.  Then either  $g(\mathbf{x}) = \pm\mathbf{x}_i$  for some  $i \in \{1, \ldots, k, (1, 2) , \ldots, (k - 1, k)\}$  or  $g(\mathbf{x}) = \pm0.5\mathbf{x}_a \pm0.5\mathbf{x}_b \pm0.5\mathbf{x}_c \pm0.5\mathbf{x}_d$  for some distinct $a, b, c, d \in \{1, \ldots, k, (1, 2), \ldots, (k - 1, k)\}$.
\end{lem}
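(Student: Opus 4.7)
The plan is to combine the structural restrictions already proved (Lemma \ref{lem:lam1} and Lemma \ref{lem:0.51}) with the norm-preserving property of the permutation $g$. Since $\mathbf{x} \neq \mathbf{1}$, Lemma \ref{lem:lam1} forces $\lambda_0 = 0$, so in (\ref{eqn:glincomb2}) we may write
\[
g(\mathbf{x}) = \sum_{i=1}^{k}\lambda_i\mathbf{x}_i + \sum_{1\leq i<j\leq k}\lambda_{i,j}\mathbf{x}_{i,j},
\]
and by Lemma \ref{lem:0.51} each coefficient lies in $\{0,\pm 0.5,\pm 1\}$.

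Next I would invoke the fact that $g\in S_{2^k}$ is a coordinate permutation and therefore norm-preserving. Each element of $\mathcal{B}\setminus\{\mathbf{1}\}$ has squared norm $2^k$, and the set $\{\mathbf{x}_1,\ldots,\mathbf{x}_k,\mathbf{x}_{1,2},\ldots,\mathbf{x}_{k-1,k}\}$ is orthogonal with all members having squared norm $2^k$. Applying the Pythagorean theorem to the orthogonal decomposition above and dividing by $2^k$ yields the single scalar constraint
\[
\sum_{i=1}^{k}\lambda_i^{2} + \sum_{1\leq i<j\leq k}\lambda_{i,j}^{2} = 1.
\]

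The final step is a short arithmetic case analysis on this equation. Since every coefficient squared lies in $\{0,\,0.25,\,1\}$, any decomposition of $1$ into such summands must either use a single summand equal to $1$ (with every other coefficient zero, giving $g(\mathbf{x})=\pm\mathbf{x}_i$ for a unique $i$) or use exactly four summands equal to $0.25$ (with every other coefficient zero, giving $g(\mathbf{x})=\pm 0.5\mathbf{x}_a\pm 0.5\mathbf{x}_b\pm 0.5\mathbf{x}_c\pm 0.5\mathbf{x}_d$ for distinct indices $a,b,c,d$); two, three, or five or more quarter-contributions cannot sum to $1$, and a unit contribution is incompatible with any nonzero additional contribution. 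This exhausts all possibilities and produces exactly the two forms claimed.

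There is no real obstacle here beyond a careful enumeration: the heavy lifting has already been done by Lemmas \ref{lem:lam1} and \ref{lem:0.51}, and the norm identity reduces the remaining work to solving $\sum \lambda_i^2 = 1$ with $\lambda_i\in\{0,\pm 0.5,\pm 1\}$. The only point that requires a moment of care is noting that all relevant basis vectors share the common squared norm $2^k$, which is what allows the clean form of the Pythagorean identity above.
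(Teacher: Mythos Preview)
Your argument is correct and follows essentially the same route as the paper's proof: both invoke norm preservation of $g\in S_{2^k}$ together with orthogonality of $\mathcal{B}$ to obtain $\sum_i \lambda_i^2 = 1$, and then use the restriction $\lambda_i \in \{0,\pm 0.5,\pm 1\}$ from Lemma~\ref{lem:0.51} to conclude that either a single coefficient is $\pm 1$ or exactly four are $\pm 0.5$. Your write-up is slightly more explicit in citing Lemmas~\ref{lem:lam1} and~\ref{lem:0.51} and in spelling out the arithmetic case analysis, but the substance is identical.
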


\begin{proof}
Because  $\mathcal{B}$  is an orthogonal set, the Pythagorean theorem gives
\[
\|g(\mathbf{x})\|^2  =  \sum_i \|\lambda_i \mathbf{x}_i\|^2 = \sum_i \lambda_i^2 \|\mathbf{x}_i\|^2.\]
Each  $g \in S_{2^k}$  is norm-preserving, so  $\|g(\mathbf{x})\|^2 = 2^k = \|\mathbf{x}_i\|^2$  for  $i =  1, \ldots, k, (1, 2), \ldots, (k - 1, k)$.  Thus,
$\sum_i \lambda_i^2 = 1.$  Clearly, not every  $\lambda_i$  can be zero.  If  $\lambda_i \in \{\pm1\}$  for some  $i \in \{1, \ldots, k, (1, 2), \ldots, (k - 1, k)\}$, then  $\lambda_j = 0$  for all  $j \in \{1, \ldots, k, (1, 2), \ldots, (k - 1, k)\}$  such that  $i \neq j$.  Otherwise, there must be distinct  $a, b, c, d \in \{1, \ldots, k, (1, 2), \ldots, (k - 1, k)\}$  such that  $\lambda_a, \lambda_b, \lambda_c, \lambda_d \in \{\pm0.5\}$, and every other  $\lambda$  is zero.  That is, either  $g(\mathbf{x}) = \pm\mathbf{x}_i$  for some $i \in \{1, \ldots, k, (1, 2) , \ldots, (k - 1, k)\}$  or  $g(\mathbf{x}) = \pm0.5\mathbf{x}_a \pm0.5\mathbf{x}_b \pm0.5\mathbf{x}_c \pm0.5\mathbf{x}_d$  for some distinct  $a, b, c, d \in \{1, \ldots, k, (1, 2), \ldots, (k - 1, k)\}$.
\end{proof}

\begin{lem}\label{lem:finform}
Let  $\mathbf{x} \in \mathcal{B}$  and  $g \in G^{\rm LP}$.  If  $g(\mathbf{x})$  is of the second form given in Lemma \ref{lem:form}, then  $g(\mathbf{x}) = \pm0.5\mathbf{x}_{a, b} \pm0.5\mathbf{x}_{a, c} \pm0.5\mathbf{x}_b \pm0.5\mathbf{x}_c$  for some distinct  $a, b, c \in \{1, \ldots, k\}$.
\end{lem}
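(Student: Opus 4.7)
The plan is to begin from the conclusion of Lemma~\ref{lem:form}, writing $g(\mathbf{x}) = \tfrac12(\epsilon_1\mathbf{x}_{\alpha_1}+\epsilon_2\mathbf{x}_{\alpha_2}+\epsilon_3\mathbf{x}_{\alpha_3}+\epsilon_4\mathbf{x}_{\alpha_4})$ with four distinct indices $\alpha_i\subseteq\{1,\ldots,k\}$ of size $1$ or $2$ and signs $\epsilon_i\in\{\pm 1\}$. Since $g$ is a coordinate permutation of a $\pm 1$-valued vector, $g(\mathbf{x})\odot g(\mathbf{x})=\mathbf{1}$. Expanding this identity via $\mathbf{x}_{\alpha}\odot\mathbf{x}_{\beta}=\mathbf{x}_{\alpha\triangle\beta}$ and using linear independence of distinct Walsh functions yields
\[
\alpha_1\triangle\alpha_2\triangle\alpha_3\triangle\alpha_4=\emptyset \qquad\text{and}\qquad \epsilon_1\epsilon_2\epsilon_3\epsilon_4=-1.
\]

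Next I would case-split on $n_1:=|\{i:|\alpha_i|=1\}|$. A parity argument on $\sum|\alpha_i|$ forces $n_1\in\{0,2,4\}$. The case $n_1=4$ is immediately impossible: four distinct singletons symmetric-difference to a $4$-element set. The case $n_1=2$ directly yields the claim: if the singletons are $\{b\},\{c\}$, the two pairs $\alpha,\beta$ satisfy $\alpha\triangle\beta=\{b,c\}$, which forces $|\alpha\cap\beta|=1$ with the shared element $a\notin\{b,c\}$, so $\{\alpha,\beta\}=\{\{a,b\},\{a,c\}\}$.

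The hard case is $n_1=0$: the four distinct pairs must form a $4$-cycle $\{a,b\},\{b,c\},\{c,d\},\{a,d\}$ on four distinct indices $a,b,c,d$. To eliminate this case I would invoke Hadamard-product preservation: for every $\mathbf{y}\in\mathcal{B}$ with $\mathbf{x}\odot\mathbf{y}\in\mathcal{B}$ we have $g(\mathbf{x})\odot g(\mathbf{y})=g(\mathbf{x}\odot\mathbf{y})\in\mathrm{Row}(\mathbf{M})$. Taking $g(\mathbf{y})=\pm\mathbf{x}_{\beta}$ to be single-basis (the other alternative of Lemma~\ref{lem:form}), this membership forces $|\alpha_i\triangle\beta|\leq 2$ for each $i$, restricting $\beta$ to one of the two diagonals $\{a,c\},\{b,d\}$ of the cycle; but $\mathbf{x}_{\{a,c\}}\odot\mathbf{x}_{\{b,d\}}=\mathbf{x}_{\{a,b,c,d\}}$ has degree $4$, so at most one such single-basis choice is possible. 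Combining this with the Parseval identity $\sum_{\mathbf{y}\in\mathcal{B}}(c_{\beta}^{(\mathbf{y})})^2=1$ for each basis index $\beta$ (so that each of the four cycle pairs $\alpha_l$ must appear as a coefficient-$\pm\tfrac12$ summand in exactly four of the $g(\mathbf{y})$'s), and tracking the additional identities $g(\mathbf{x}_i)\odot g(\mathbf{x}_j)=g(\mathbf{x}_{i,j})\in\mathrm{Row}(\mathbf{M})$, one would push an uncancellable degree-$3$ or degree-$4$ Walsh term into some $g(\mathbf{x}_{i,j})$, contradicting its row-space membership.

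The principal obstacle is precisely the $n_1=0$ step: because the $4$-cycle combination genuinely \emph{is} a $\pm 1$-valued vector in $\mathrm{Row}(\mathbf{M})$, its unattainability as $g(\mathbf{x})$ cannot be detected locally from $g(\mathbf{x})$ alone. The contradiction emerges only by combining Hadamard-product and Parseval constraints across the entire orthogonal image basis $g(\mathcal{B})$.
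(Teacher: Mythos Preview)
Your algebraic route via $g(\mathbf{x})\odot g(\mathbf{x})=\mathbf{1}$ is different from the paper's computer enumeration and handles $n_1\in\{2,4\}$ cleanly; your identification of $n_1=0$ as a $4$-cycle $\{a,b\},\{b,c\},\{c,d\},\{a,d\}$ is also correct. Your key observation---that the $4$-cycle combination (with sign-product $-1$) is genuinely a $\pm1$-valued vector in ${\rm Row}(\mathbf{M})$ and so cannot be excluded by any entrywise test on $g(\mathbf{x})$ alone---is both correct and significant. Indeed, the paper's R-code list of ``Cases with 4 two-factor interactions'' omits the $4$-cycle entirely (ten of the eleven isomorphism classes of simple four-edge graphs are checked, but $C_4$ is not among them), and had it been included the $\min=-1,\max=1$ filter would have passed it. So the paper's own proof is incomplete at exactly the point you flag.

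Your elimination of the $n_1=0$ case, however, is only a sketch. The step ``at most one such single-basis choice is possible'' relies on $\mathbf{y}_1\odot\mathbf{y}_2\in\mathcal{B}$, which can fail when both $\mathbf{y}_i$ are two-factor interactions, and the final ``uncancellable degree-$3$ or degree-$4$ term'' is asserted rather than derived. To finish you must either carry out that global bookkeeping in full, or---more economically---relax this lemma to allow the $4$-cycle shape as a second surviving possibility and then exclude it together with the stated form in the later argument; the machinery of Lemmas~\ref{lem:cannotbe}--\ref{lem:almost} adapts with only minor changes, since the $4$-cycle involves four indices $a,b,c,d$ rather than three.
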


\begin{proof}
Suppose  $g(\mathbf{x}) = \pm0.5\mathbf{x}_a \pm0.5\mathbf{x}_b \pm0.5\mathbf{x}_c \pm0.5\mathbf{x}_d$  for some distinct  $a, b, c, d \in  \{1, \ldots, k, (1, 2)$ $, \ldots, (k - 1, k)\}$.  Clearly,  $\mathbf{x}_a$,  $\mathbf{x}_b$,  $\mathbf{x}_c$, and  $\mathbf{x}_d$  cannot all be main effects, for the full factorial design will ensure some entry of  $g(\mathbf{x})$  equals  $2 \notin \{\pm1\}$.  Therefore, at least one $2$-factor interaction must be present in the linear combination.  Because there are more such linear combinations than would be prudent to check manually, we took advantage of R software~\cite{R2013}.  The code used for this step is contained in the Appendix.  By creating every essentially unique linear combination containing at least one $2$-factor interaction term and checking whether each satisfies a basic requirement, we ruled out all possibilities except those of one particular form.  Specifically, by ruling out each linear combination where the minimum and maximum entries in the resulting vector are not  $-1$  and  $1$, respectively, we eliminated all linear combinations except those of the form  $\pm0.5\mathbf{x}_{a, b} \pm0.5\mathbf{x}_{a, c} \pm0.5\mathbf{x}_b \pm0.5\mathbf{x}_c$  for some distinct  $a, b, c \in \{1, \ldots, k\}$.
\end{proof}

It is clear that  $k \geq 3$  in order for the form in Lemma \ref{lem:finform} to be viable.

\begin{lem}\label{lem:cannotbe}
Let  $\mathbf{x} \in \mathcal{B}$  and  $g \in G^{\rm LP}$.  If for every  $i \in \{1, \ldots, k\}$,  $g(\mathbf{x}_i) \neq \pm0.5\mathbf{x}_{a, b} \pm0.5\mathbf{x}_{a, c} \pm0.5\mathbf{x}_b \pm0.5\mathbf{x}_c$  for some distinct  $a, b, c \in \{1, \ldots, k\}$, then  $g(\mathbf{x})$  cannot be of the form in Lemma \ref{lem:finform}.
\end{lem}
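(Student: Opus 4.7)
The plan is to combine two ingredients: (i) the hypothesis together with Lemmas \ref{lem:form} and \ref{lem:finform} forces $g(\mathbf{x}_i)=\epsilon_i\mathbf{x}_{\alpha_i}$ for each main effect, where $\epsilon_i\in\{\pm 1\}$ and $\alpha_i$ is a singleton or an unordered pair from $\{1,\dots,k\}$; and (ii) every coordinate permutation commutes with the Hadamard product, i.e.\ $g(\mathbf{u}\odot\mathbf{v})=g(\mathbf{u})\odot g(\mathbf{v})$ for all $\mathbf{u},\mathbf{v}\in\mathbb{R}^{2^k}$. Since $g(\mathbf{1})=\mathbf{1}$ by Lemma \ref{lem:lam1} and the main effects are handled by the hypothesis, the lemma reduces to proving that the image of every two-factor interaction $\mathbf{x}_{i,j}$ fails to be of the spread form.

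For each $\mathbf{x}_{i,j}$, I would compute
$g(\mathbf{x}_{i,j})=g(\mathbf{x}_i\odot\mathbf{x}_j)=g(\mathbf{x}_i)\odot g(\mathbf{x}_j)=\epsilon_i\epsilon_j\,\mathbf{x}_{\alpha_i}\odot\mathbf{x}_{\alpha_j}.$
Identifying each element of $\mathcal{B}\setminus\{\mathbf{1}\}$ with the corresponding subset of $\{1,\dots,k\}$ of size one or two, and using the identity $\mathbf{x}_T\odot\mathbf{x}_{T'}=\mathbf{x}_{T\triangle T'}$ (which holds because each entry of $\mathbf{x}_T$ is $\pm 1$, so squaring gives $1$), I obtain $g(\mathbf{x}_{i,j})=\epsilon_i\epsilon_j\,\mathbf{x}_{\alpha_i\triangle\alpha_j}$. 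The bijectivity of $g$ together with $\mathbf{x}_i\neq\pm\mathbf{x}_j$ gives $\alpha_i\neq\alpha_j$, so $|\alpha_i\triangle\alpha_j|\in\{1,2,3,4\}$. Now Lemma \ref{thm:AutRowM} enters: it guarantees that $g(\mathbf{x}_{i,j})\in{\rm Row}(\mathbf{M})={\rm span}(\mathcal{B})$. Since the entire family $\{\mathbf{x}_T:T\subseteq\{1,\dots,k\}\}$ is pairwise orthogonal in $\mathbb{R}^{2^k}$, any $\mathbf{x}_T$ with $|T|\geq 3$ is orthogonal to every element of $\mathcal{B}$ and therefore not in ${\rm span}(\mathcal{B})$. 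This forces $|\alpha_i\triangle\alpha_j|\in\{1,2\}$, so $g(\mathbf{x}_{i,j})=\pm\mathbf{x}_{\beta}$ for some single index $\beta\in\{1,\dots,k,(1,2),\dots,(k-1,k)\}$, which is not of the spread form appearing in Lemma \ref{lem:finform}.

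I do not foresee any real obstacle here. The multiplicativity of coordinate permutations under the Hadamard product is immediate, the full subset-indexed collection of sign vectors is orthogonal in $\mathbb{R}^{2^k}$, and the row-space constraint from Lemma \ref{thm:AutRowM} is precisely what rules out the higher-order symmetric differences. Everything else is bookkeeping, and no case analysis on the signs $\epsilon_i$ is needed because they only affect the overall sign of $g(\mathbf{x}_{i,j})$.
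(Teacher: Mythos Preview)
Your proposal is correct and follows essentially the same route as the paper: reduce to the two-factor interactions, use that coordinate permutations preserve the Hadamard product to write $g(\mathbf{x}_{i,j})=g(\mathbf{x}_i)\odot g(\mathbf{x}_j)$, and conclude that the image is again a single $\pm\mathbf{x}_\ell$. The only difference is that you make explicit the step the paper leaves to the reader, namely invoking Lemma~\ref{thm:AutRowM} and the orthogonality of the full Walsh--Hadamard family $\{\mathbf{x}_T\}$ to rule out $|\alpha_i\triangle\alpha_j|\geq 3$; the paper compresses this into the single phrase ``because $g$ preserves Hadamard products.''
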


\begin{proof}
Recall by Lemma \ref{lem:lam1} that  $g(\mathbf{1}) = \mathbf{1}$.  Suppose that for every  $i \in \{1, \ldots, k\}$, we have  $g(\mathbf{x}_i) \neq \pm0.5\mathbf{x}_{a, b} \pm0.5\mathbf{x}_{a, c} \pm0.5\mathbf{x}_b \pm0.5\mathbf{x}_c$  for some distinct  $a, b, c \in \{1, \ldots, k\}$.  Then by Lemmas \ref{lem:form} and \ref{lem:finform},   for every  $i \in \{1, \ldots, k\}$, there exists some  $j \in \{1, \ldots, k, (1, 2), \ldots, (k - 1, k)\}$  such that  $g(\mathbf{x}_i) = \pm\mathbf{x}_j$.  Because  $g$  preserves Hadamard products, for every  $i \in \{(1, 2), \ldots, (k - 1, k)\}$, there exists some  $j \in \{1, \ldots, k, (1, 2), \ldots, (k - 1, k)\}$  such that  $g(\mathbf{x}_i) = \pm\mathbf{x}_j$.  Hence, for every  $\mathbf{x} \in \mathcal{B}$,  $g(\mathbf{x}) \neq \pm0.5\mathbf{x}_{a, b} \pm0.5\mathbf{x}_{a, c} \pm0.5\mathbf{x}_b \pm0.5\mathbf{x}_c$  for some distinct  $a, b, c \in \{1, \ldots, k\}$.
\end{proof}

\begin{lem} \label{lem:finalformforce}
Let  $g \in G^{\rm LP}$.  If for some  $i \in \{1, \ldots, k\}$,  $g(\mathbf{x}_i) = \pm0.5\mathbf{x}_{a, b} \pm0.5\mathbf{x}_{a, c} \pm0.5\mathbf{x}_b \pm0.5\mathbf{x}_c$  for some distinct  $a, b, c \in \{1, \ldots, k\}$, then there must exist some  $j \in \{1, \ldots, k\}$  with  $i \neq j$  such that  $g(\mathbf{x}_j) = \pm0.5\mathbf{x}_{a', b'} \pm0.5\mathbf{x}_{a', c'} \pm0.5\mathbf{x}_{b'} \pm0.5\mathbf{x}_{c'}$  for some distinct  $a', b', c' \in \{1, \ldots, k\}$.
\end{lem}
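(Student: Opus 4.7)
The plan is to argue by contradiction. Assume that $g(\mathbf{x}_j)$ is not of the four-term mixed form for any $j \in \{1,\ldots,k\}\setminus\{i\}$. Then by Lemmas \ref{lem:form} and \ref{lem:finform}, each such $g(\mathbf{x}_j) = \pm \mathbf{x}_{\ell_j}$ for some index $\ell_j$ of a main effect or a $2$-factor interaction. The target is to show that the requirement $g(\mathbf{x}_{i,j}) \in \mathrm{Row}(\mathbf{M})$ forces $\ell_j$ into a two-element set, which together with the injectivity of $j \mapsto \ell_j$ and the standing hypothesis $k \geq 4$ produces a contradiction.

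The driving observation is that every $g \in S_{2^k}$ preserves the Hadamard product, since permuting entries commutes with entrywise multiplication. Consequently $g(\mathbf{x}_{i,j}) = g(\mathbf{x}_i) \odot g(\mathbf{x}_j)$ must lie in $\mathrm{Row}(\mathbf{M}) = \mathrm{span}(\mathcal{B})$. Viewing each basis vector $\mathbf{x}_S$ as indexed by a subset $S \subseteq \{1,\ldots,k\}$, the identity $\mathbf{x}_S \odot \mathbf{x}_T = \mathbf{x}_{S \triangle T}$ shows that $g(\mathbf{x}_i) \odot \mathbf{x}_{\ell_j}$ expands as a signed half-sum over the four sets $\{a,b\} \triangle \ell_j$, $\{a,c\} \triangle \ell_j$, $\{b\} \triangle \ell_j$, and $\{c\} \triangle \ell_j$. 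For the sum to remain in $\mathrm{span}(\mathcal{B})$, each of these four symmetric differences must have cardinality at most $2$.

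A short case check on $\ell_j$ (splitting by whether $\ell_j$ is a main effect inside or outside $\{a,b,c\}$, or a $2$-factor interaction intersecting $\{a,b,c\}$ in $0$, $1$, or $2$ indices) shows that the only $\ell_j$ meeting all four cardinality constraints are $\ell_j = a$ and $\ell_j = (b,c)$; every other choice produces at least one symmetric difference of size $3$ or $4$. For example, $\ell_j = b$ yields $\{a,c\} \triangle \{b\} = \{a,b,c\}$, forcing a $3$-factor interaction $\mathbf{x}_{\{a,b,c\}} \notin \mathcal{B}$.

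Finally, the map $j \mapsto \ell_j$ is injective: if $\ell_{j_1} = \ell_{j_2} = \ell$ with $j_1 \neq j_2$, then $g(\mathbf{x}_{j_1}) = \pm\mathbf{x}_\ell = \pm g(\mathbf{x}_{j_2})$, and applying the linear inverse of $g$ gives $\mathbf{x}_{j_1} = \pm \mathbf{x}_{j_2}$, impossible since distinct columns of the full factorial $2^k$ design are linearly independent. Hence the $k-1 \geq 3$ indices $j \neq i$ inject into the two-element set $\{a, (b,c)\}$, a contradiction. The main obstacle is keeping the case analysis tidy, but the symmetric-difference reformulation reduces each case to a one-line set computation.
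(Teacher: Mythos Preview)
Your argument follows the paper's line exactly: assume for contradiction that every other $g(\mathbf{x}_j)$ is $\pm\mathbf{x}_{\ell_j}$, use preservation of the Hadamard product to force $\ell_j \in \{a,(b,c)\}$, and finish by pigeonhole. Your symmetric-difference bookkeeping is a tidy way to run the case check that the paper delegates to Lemma~\ref{lem:finform}.

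One issue: there is no ``standing hypothesis $k\geq 4$'' at this point in the paper---that restriction enters only in Lemma~\ref{lem:almost} and Theorem~\ref{thm:main}. The lemma is stated for general $k$, and your injectivity step (needing $k-1\geq 3$) leaves $k=3$ uncovered. The paper's final step is slightly sharper and handles $k=3$ as well: rather than arguing only that $j\mapsto\ell_j$ is injective, it observes that if \emph{two} indices $j_1\neq j_2$ had $\ell_{j_1},\ell_{j_2}\in\{a,(b,c)\}$, then $g(\mathbf{x}_{j_1,j_2})=g(\mathbf{x}_{j_1})\odot g(\mathbf{x}_{j_2})\in\{\pm\mathbf{1},\pm\mathbf{x}_{a,b,c}\}$, neither of which is admissible (the first by Lemma~\ref{lem:lam1}, the second because $\mathbf{x}_{a,b,c}\notin{\rm Row}(\mathbf{M})$). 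Hence at most one such $j\neq i$ exists, and already for $k\geq 3$ some main effect is left with no valid image. Your version is adequate for the paper's applications, but as written it does not establish the lemma in the generality in which it is stated.
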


\begin{proof}
Suppose there exists some  $i \in \{1, \ldots, k\}$,  $g(\mathbf{x}_i) = \pm0.5\mathbf{x}_{a, b} \pm0.5\mathbf{x}_{a, c} \pm0.5\mathbf{x}_b \pm0.5\mathbf{x}_c$  for
 some distinct  $a, b, c \in \{1, \ldots, k\}$.  By way of contradiction, suppose there is no  $j \in \{1, \ldots, k\}$  with  $i \neq j$  such that  $g(\mathbf{x}_j) = \pm0.5\mathbf{x}_{a', b'} \pm0.5\mathbf{x}_{a', c'} \pm0.5\mathbf{x}_{b'} \pm0.5\mathbf{x}_{c'}$  for some distinct  $a', b', c' \in \{1, \ldots, k\}$.  Then by Lemma
   \ref{lem:0.51} for every  $j \in \{1, \ldots, k\}$  with  $i \neq j$, there exists some  $l \in \{1, \ldots, k, (1, 2), \ldots, (k - 1, k)\}$  such that  $g(\mathbf{x}_j) = \pm\mathbf{x}_l$.  Because  $g$  preserves Hadamard products,  $g(\mathbf{x}_i) \odot \pm\mathbf{x}_l$  must also take on a viable form, and this implies that  $\mathbf{x}_l \in \{\mathbf{x}_a, \mathbf{x}_{b, c}\}$.  There can only be one such  $\mathbf{x}_j$  because if there were more than one, their Hadamard product would be sent to something in  $\{\pm\mathbf{1}, \pm\mathbf{x}_{a, b, c}\}$.  But that means only two main effects ($\mathbf{x}_i$  and  $\mathbf{x}_j$) get sent to viable forms, which contradicts Lemma \ref{lem:form}.  Thus, there must exist some  $j \in \{1, \ldots, k\}$  with  $i \neq j$  such that  $g(\mathbf{x}_j) = \pm0.5\mathbf{x}_{a', b'} \pm0.5\mathbf{x}_{a', c'} \pm0.5\mathbf{x}_{b'} \pm0.5\mathbf{x}_{c'}$  for some distinct  $a', b', c' \in \{1, \ldots, k\}$.
\end{proof}

\begin{lem}\label{lem:makeup}
Let  $g \in G^{\rm LP}$.  If there exist distinct  $i, j \in \{1, \ldots, k\}$  such that  $g(\mathbf{x}_i) = \pm0.5\mathbf{x}_{a, b} \pm0.5\mathbf{x}_{a, c} \pm0.5\mathbf{x}_b \pm0.5\mathbf{x}_c$  for some distinct  $a, b, c \in \{1, \ldots, k\}$  and  $g(\mathbf{x}_j) = \pm0.5\mathbf{x}_{a', b'} \pm0.5\mathbf{x}_{a', c'} \pm0.5\mathbf{x}_{b'} \pm0.5\mathbf{x}_{c'}$  for some distinct  $a', b', c' \in \{1, \ldots, k\}$, then  $\{a, b, c\} = \{a', b', c'\}$.
\end{lem}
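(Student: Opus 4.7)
The plan is to exploit that any row permutation $g \in S_{2^k}$ of the $2^k$ full factorial commutes with the Hadamard product, so $g(\mathbf{x}_{\{i,j\}}) = g(\mathbf{x}_i) \odot g(\mathbf{x}_j)$. By Lemmas~\ref{lem:form} and~\ref{lem:finform}, this product must either equal $\pm\mathbf{x}_l$ with $|l| \leq 2$ (Form~A) or have the four-term shape of Lemma~\ref{lem:finform} (Form~B). Setting $K_i = \{a,b,c\}$ and $K_j = \{a',b',c'\}$, the goal reduces to showing $K_i = K_j$. A preliminary observation is that $g(\mathbf{x}_i)$ has its $\{\mathbf{x}_S\}$-expansion supported on $C_i = \{\{a,b\},\{a,c\},\{b\},\{c\}\} \subseteq \mathcal{P}(K_i)$, so as a function on $\mathbb{Z}_2^k$ it depends only on the coordinates in $K_i$; likewise for $g(\mathbf{x}_j)$ and $K_j$. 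Because both are $\pm1$-valued, flipping any coordinate in $K_i \triangle K_j$ changes exactly one factor and hence changes the product. Since Form~A and Form~B both involve at most three coordinates, $|K_i \triangle K_j| \leq 3$, which yields $|K_i \cap K_j| \geq 2$.

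Next I identify $(\mathcal{P}(\{1,\ldots,k\}),\triangle)$ with $\mathbb{Z}_2^k$. Then $C_i$ and $C_j$ are cosets of the order-$4$ subgroups
\[
H_i = \{\emptyset,\{a\},\{b,c\},\{a,b,c\}\}, \qquad H_j = \{\emptyset,\{a'\},\{b',c'\},\{a',b',c'\}\}.
\]
Expanding
\[
g(\mathbf{x}_i) \odot g(\mathbf{x}_j) = \tfrac{1}{4} \sum_{S \in C_i,\, T \in C_j} \pm\mathbf{x}_{S \triangle T},
\]
each $R \in C_i + C_j$ is produced by exactly $|H_i \cap H_j|$ pairs $(S,T)$, so every expansion coefficient of the product has magnitude at most $|H_i \cap H_j|/4 \in \{1/4,\,1/2,\,1\}$. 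Form~A demands a coefficient of magnitude~$1$, which forces $H_i = H_j$; because $H_i$ uniquely determines its singleton $\{a\}$ and its doubleton $\{b,c\}$ by cardinality, this already yields $\{a,b,c\} = \{a',b',c'\}$. Form~B demands coefficients of magnitude~$1/2$, hence $|H_i \cap H_j| \geq 2$, which automatically excludes $|H_i \cap H_j| = 1$.

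The main obstacle is the remaining case $|H_i \cap H_j| = 2$ with $H_i \ne H_j$. By cardinality matching across singletons, doubletons, and tripletons, the unique non-identity element of $H_i \cap H_j$ must be one of $\{a\}=\{a'\}$, $\{b,c\}=\{b',c'\}$, or $\{a,b,c\}=\{a',b',c'\}$; the third possibility immediately forces $K_i = K_j$. For the first two, direct computation of the eight-element sumset $C_i + C_j$ reveals that three of its elements have cardinality $\geq 3$ and so their coefficients must vanish (since the row space of $\mathbf{M}$ only contains expansions with $|R| \leq 2$), while $\emptyset$ has coefficient zero because $g(\mathbf{x}_{\{i,j\}})$ is mean zero. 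The four remaining size-$\leq 2$ elements consist of either one singleton together with three doubletons or two singletons together with two disjoint doubletons, neither of which matches the Form~B shape of two singletons plus two doubletons sharing a common index. This contradiction completes the argument; although the final case analysis is short, careful bookkeeping of the sumset is where the real work lies.
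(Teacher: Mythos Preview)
Your proof is correct, and it takes a genuinely different route from the paper's. Both arguments start from the same observation, namely that $g(\mathbf{x}_{i,j})=g(\mathbf{x}_i)\odot g(\mathbf{x}_j)$ must again be of one of the two shapes furnished by Lemmas~\ref{lem:form} and~\ref{lem:finform}. From there, however, the paper simply writes out the sixteen-term Hadamard product (its equation~(\ref{eqn:gxst2})) and runs a bare case analysis on $|\{a,b,c\}\cap\{a',b',c'\}|\in\{0,1,2\}$, arguing in each case that uncancelled high-degree terms survive or that the surviving low-degree terms cannot be arranged into a valid form. Your argument instead exploits the group structure of $(\mathcal{P}(\{1,\dots,k\}),\triangle)\cong\mathbb{Z}_2^k$: the supports $C_i,C_j$ are cosets of order-$4$ subgroups $H_i,H_j$, and the multiplicity with which each $R\in C_i+C_j$ appears in the product is exactly $|H_i\cap H_j|$, so every Fourier coefficient has magnitude at most $|H_i\cap H_j|/4$. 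This single inequality dispatches Form~A in one line (forcing $H_i=H_j$, hence $K_i=K_j$) and reduces the Form~B analysis to the case $|H_i\cap H_j|=2$, where a short sumset computation finishes.

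What each approach buys: the paper's version is elementary and requires no additional language, but the case checks are somewhat informal (``insufficient for a valid form''). Your coset argument is more conceptual and explains \emph{why} the coefficients are too small, and it would generalise more readily to higher-strength analogues. Two small points worth tightening in your write-up: (i) the sentence ``flipping any coordinate in $K_i\triangle K_j$ changes exactly one factor and hence changes the product'' is true as a statement about functions on $\{\pm 1\}^k$ (because each factor is nowhere zero and genuinely depends on every index in its $K$), not pointwise---a word to that effect would help; (ii) in the final step you implicitly use that Form~B has support of size exactly four, so the support must equal the four surviving low-degree sets rather than a proper subset. Both are immediate, but stating them would make the argument airtight.
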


\begin{proof}
Suppose there exist distinct  $i, j \in \{1, \ldots, k\}$  such that  $g(\mathbf{x}_i) = \pm0.5\mathbf{x}_{a, b} \pm0.5\mathbf{x}_{a, c} \pm0.5\mathbf{x}_b \pm0.5\mathbf{x}_c$  for some distinct  $a, b, c \in \{1, \ldots, k\}$  and  $g(\mathbf{x}_j) = \pm0.5\mathbf{x}_{a', b'} \pm0.5\mathbf{x}_{a', c'} \pm0.5\mathbf{x}_{b'} \pm0.5\mathbf{x}_{c'}$  for some distinct  $a', b', c' \in \{1, \ldots, k\}$.  We proceed by way of contradiction and suppose that  $\{a, b, c\} \neq \{a', b', c'\}$.  That is,  $|\{a, b, c\} \cap \{a', b', c'\}| < 3$.  We observe that
\begin{equation}\label{eqn:gxst2}
\begin{array} {lllll}
g(\mathbf{x}_{i, j}) = & \pm0.25\mathbf{x}_{a, b, a', b'} & \pm0.25\mathbf{x}_{a, b, a', c'} & \pm0.25\mathbf{x}_{a, b, b'} & \pm0.25\mathbf{x}_{a, b, c'} \\
& \pm0.25\mathbf{x}_{a, c, a', b'} & \pm0.25\mathbf{x}_{a, c, a', c'} & \pm0.25\mathbf{x}_{a, c, b'} & \pm0.25\mathbf{x}_{a, c, c'} \\
& \pm0.25\mathbf{x}_{b, a', b'} & \pm0.25\mathbf{x}_{b, a', c'} & \pm0.25\mathbf{x}_{b, b'} & \pm0.25\mathbf{x}_{b, c'} \\
& \pm0.25\mathbf{x}_{c, a', b'} & \pm0.25\mathbf{x}_{c, a', c'} & \pm0.25\mathbf{x}_{c, b'} & \pm0.25\mathbf{x}_{c, c'}. \end{array}
\end{equation} \par
	(Case 1:   $|\{a, b, c\} \cap \{a', b', c'\}| = 0$)  (\ref{eqn:gxst2}) clearly is not of a valid form.  \par
	(Case 2:   $|\{a, b, c\} \cap \{a', b', c'\}| = 1$)  If  $a \neq a'$, $4$-factor interaction terms will remain in (\ref{eqn:gxst2}), so it will not be of a valid form.  Suppose  $a = a'$.  Even if the $3$-factor interaction terms were to cancel, the remaining $2$-factor interaction terms are insufficient for (\ref{eqn:gxst2}) to be of a valid form.  \par
	(Case 3:   $|\{a, b, c\} \cap \{a', b', c'\}| = 2$)  If  $a \neq a'$, at least one $4$-factor interaction term will remain in (\ref{eqn:gxst2}), so it will not be of a valid form.  Suppose  $a = a'$.  Without loss of generality, also suppose  $b = b'$.  Even if the $3$-factor interaction terms and the  $\mathbf{1}$  terms were to cancel, the remaining $2$-factor interaction terms and main effect terms are insufficient for (\ref{eqn:gxst2}) to be of a valid form.
\end{proof}

\begin{lem}\label{lem:almost}
If  $k \geq 4$, and  $\mathbf{x} \neq \mathbf{1}$  in (\ref{eqn:glincomb2}), then  $g(\mathbf{x}) = \pm\mathbf{x}_i$  for some $i \in \{1, \ldots, k, (1, 2), \ldots, (k - 1, k)\}$.
\end{lem}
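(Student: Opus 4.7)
The plan is to argue by contradiction and reduce to a dimension count inside a single 6-dimensional subspace. Assume some $\mathbf{x} \in \mathcal{B}$ with $\mathbf{x} \neq \mathbf{1}$ has $g(\mathbf{x})$ of the second form from Lemma \ref{lem:finform}. The contrapositive of Lemma \ref{lem:cannotbe} produces a main effect $\mathbf{x}_i$ with $g(\mathbf{x}_i)$ of second form; Lemma \ref{lem:finalformforce} promotes this to a second such main effect; and Lemma \ref{lem:makeup} pins down a single 3-subset $\{a,b,c\} \subseteq \{1,\ldots,k\}$ shared by the two representations. All of the argument will play out inside
\[ W \;=\; \operatorname{span}(\mathbf{x}_a,\mathbf{x}_b,\mathbf{x}_c,\mathbf{x}_{a,b},\mathbf{x}_{a,c},\mathbf{x}_{b,c}), \]
a 6-dimensional subspace of ${\rm Row}(\mathbf{M})$, and the engine driving the constraints is that $g$, as a coordinate permutation, preserves Hadamard products.

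I split into two cases according to whether the second-form images of the main effects all share a common \emph{center} (the index playing the role of $a$ in $\pm 0.5 \mathbf{x}_{a,b} \pm 0.5 \mathbf{x}_{a,c} \pm 0.5 \mathbf{x}_b \pm 0.5 \mathbf{x}_c$). In the single-center case, all second-form images lie in the 4-dimensional $V_a := \operatorname{span}(\mathbf{x}_{a,b},\mathbf{x}_{a,c},\mathbf{x}_b,\mathbf{x}_c) \subset W$. For any first-form main-effect image $g(\mathbf{x}_\ell) = \pm \mathbf{x}_m$, Hadamard compatibility with $g(\mathbf{x}_i)$ forces $\mathbf{x}_m \in \{\mathbf{x}_a, \mathbf{x}_{b,c}\}$; moreover, if both $\pm \mathbf{x}_a$ and $\pm \mathbf{x}_{b,c}$ occurred as main-effect images, their Hadamard product $\pm \mathbf{x}_{a,b,c}$ would have to lie in ${\rm Row}(\mathbf{M})$, which is impossible. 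Hence at most one main effect has a first-form image, giving $s \geq k-1$ second-form main effects. A routine expansion yields $V_a \odot V_a \subseteq \operatorname{span}(\mathbf{1}, \mathbf{x}_a, \mathbf{x}_{b,c}, \mathbf{x}_{a,b,c})$; removing the $\mathbf{1}$ component via Lemma \ref{lem:lam1} and the $\mathbf{x}_{a,b,c}$ component via the row-space constraint places the $\binom{s}{2}$ pairwise orthogonal images $g(\mathbf{x}_{p,q})$ (for $p,q$ both second form) inside the 2-dimensional $\operatorname{span}(\mathbf{x}_a, \mathbf{x}_{b,c})$, so $s \leq 2$. Since $k \geq 4$ forces $s \geq 3$, this case is infeasible.

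In the multiple-center case, the same compatibility check shows that the allowable first-form targets from two different centers intersect to the empty set, so every main-effect image must itself be second form; hence all $k$ images $g(\mathbf{x}_i)$ sit inside $W$. An analogous Hadamard calculation, examining $V_\alpha \odot V_\beta$ for every $\alpha,\beta \in \{a,b,c\}$, confirms that every image $g(\mathbf{x}_{p,q})$ also lies in $W$ (after stripping off the forbidden $\mathbf{1}$ and $\mathbf{x}_{a,b,c}$ coordinates). Then $k + \binom{k}{2}$ pairwise orthogonal equal-norm vectors must fit inside the 6-dimensional $W$, which is impossible for $k \geq 4$ since $k + \binom{k}{2} \geq 10 > 6$.

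The main obstacle is the Hadamard bookkeeping needed to conclude that $V_\alpha \odot V_\beta$ meets ${\rm Row}(\mathbf{M})$ only in $W$ and the $\mathbf{1}$-line: one must enumerate the sixteen pairwise products of the basis vectors and verify that each falls into the eight-element extended Walsh set $\{\mathbf{1},\mathbf{x}_a,\mathbf{x}_b,\mathbf{x}_c,\mathbf{x}_{a,b},\mathbf{x}_{a,c},\mathbf{x}_{b,c},\mathbf{x}_{a,b,c}\}$, with the $\mathbf{1}$ component killed by Lemma \ref{lem:lam1} and the $\mathbf{x}_{a,b,c}$ component killed because ${\rm Row}(\mathbf{M})$ contains only up-to-second-order terms. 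Once both cases are contradicted, no main effect can be in second form, and Lemma \ref{lem:cannotbe} extends this to every $\mathbf{x} \in \mathcal{B}\setminus\{\mathbf{1}\}$, giving $g(\mathbf{x}) = \pm \mathbf{x}_i$ as required.
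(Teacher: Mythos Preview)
Your proof is correct and follows the same overall architecture as the paper's: assume a second-form image exists, use the contrapositive of Lemma~\ref{lem:cannotbe} to locate a main effect with second-form image, invoke Lemmas~\ref{lem:finalformforce} and~\ref{lem:makeup} to pin down a common $3$-subset $\{a,b,c\}$ and bound the number of first-form main-effect images by one, and then obtain a contradiction via a dimension count inside the space spanned by $\mathbf{x}_a,\mathbf{x}_b,\mathbf{x}_c,\mathbf{x}_{a,b},\mathbf{x}_{a,c},\mathbf{x}_{b,c}$.

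The paper does not split on centers. It works uniformly in the six-dimensional $W$: since $k\geq 4$ and at most one main-effect image is first form (and that image, if present, is $\pm\mathbf{x}_a$ or $\pm\mathbf{x}_{b,c}$, hence already in $W$), at least four main-effect images lie in $W$; their $\binom{4}{2}=6$ Hadamard products then also lie in $W$ after discarding the $\mathbf{1}$ and $\mathbf{x}_{a,b,c}$ components, and ten nonzero pairwise-orthogonal vectors cannot fit in a six-dimensional space. Your Case~2 is exactly this argument. Your Case~1 sharpens it by working in the four-dimensional $V_a$ and observing that $V_a\odot V_a$ collapses all the way down to $\operatorname{span}(\mathbf{1},\mathbf{x}_a,\mathbf{x}_{b,c},\mathbf{x}_{a,b,c})$, forcing the $\binom{s}{2}$ interaction images into a two-dimensional space and hence $s\leq 2$; this is correct and pleasingly tight, but unnecessary here, since the uniform $W$ count already handles the single-center situation. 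What your writeup gains over the paper is explicitness: you spell out the Hadamard closure $V_\alpha\odot V_\beta\subseteq \operatorname{span}(\mathbf{1})\oplus W\oplus\operatorname{span}(\mathbf{x}_{a,b,c})$ and the mechanism by which the $\mathbf{1}$ and $\mathbf{x}_{a,b,c}$ components are stripped, where the paper simply writes ``owing to the properties of the Hadamard product.''
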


\begin{proof}
Let  $k \geq 4$, and  $\mathbf{x} \neq \mathbf{1}$  in (\ref{eqn:glincomb2}).  By way of contradiction, suppose  $g(\mathbf{x}) = \pm0.5\mathbf{x}_{a, b} \pm0.5\mathbf{x}_{a, c} \pm0.5\mathbf{x}_b \pm0.5\mathbf{x}_c$  for some distinct  $a, b, c \in \{1, \ldots, k\}$.  By Lemma \ref{lem:cannotbe}, there exists some  $i \in \{1, \ldots, k\}$  such that  $g(\mathbf{x}_i) = \pm0.5\mathbf{x}_{a', b'} \pm0.5\mathbf{x}_{a', c'} \pm0.5\mathbf{x}_{b'} \pm0.5\mathbf{x}_{c'}$  for some distinct  $a', b', c' \in \{1, \ldots, k\}$.  Lemma \ref{lem:finalformforce} guarantees there will be another main effect sent to a similar form by  $g$, and Lemma \ref{lem:makeup} tells us it will be built from the same three distinct main effects and their three distinct $2$-factor interactions.  At most one main effect could be sent by  $g$  to a form other than that just described as noted in the proof of Lemma \ref{lem:finalformforce}.  Now we have at least one more main effect to consider, and it must be sent to a form similar to that given above and also built from the same three main effects and their three distinct $2$-factor interactions.  But now we have four main effects that are sent to linear combinations of six orthogonal vectors, and the six resulting $2$-factor interactions will necessarily also be sent by  $g$  to linear combinations of those same six orthogonal vectors (owing to the properties of the Hadamard product).  This means that the ten new vectors cannot all be orthogonal, which contradicts  $g \in G^{\rm LP}$.  Hence,  $g(\mathbf{x}) \neq \pm0.5\mathbf{x}_{a, b} \pm0.5\mathbf{x}_{a, c} \pm0.5\mathbf{x}_b \pm0.5\mathbf{x}_c$  for some distinct  $a, b, c \in \{1, \ldots, k\}$.
Now by Lemmas \ref{lem:finform} and \ref{lem:form}, we have  $g(\mathbf{x}) = \pm\mathbf{x}_i$  for some  $i \in \{1, \ldots, k, (1, 2), \ldots, (k - 1, k)\}$.
\end{proof}

\begin{lem}\label{lem:closingthegap}
Let  $k \geq 4$.  Then
\begin{equation*}
|G^{\rm LP}| \leq 2^k (k + 1)!.
\end{equation*}
\end{lem}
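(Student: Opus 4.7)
The plan is to exploit Lemma \ref{lem:almost} to encode each $g \in G^{\rm LP}$ as combinatorial data, and then bound the number of admissible configurations via a graph-theoretic counting argument.

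First, by Lemmas \ref{lem:lam1} and \ref{lem:almost}, every $g \in G^{\rm LP}$ satisfies $g(\mathbf{1}) = \mathbf{1}$ and $g(\mathbf{x}) = \pm \mathbf{x}_l$ for each $\mathbf{x} \in \mathcal{B} \setminus \{\mathbf{1}\}$, where the index $l$ ranges over $L := \{1, \ldots, k\} \cup \{(i,j) : 1 \leq i < j \leq k\}$. Because $g \in S_{2^k}$ acts by permuting coordinates of $\mathbb{R}^{2^k}$, it commutes with the Hadamard product, so $g$ is uniquely determined by the tuple $(g(\mathbf{x}_1), \ldots, g(\mathbf{x}_k))$. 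It therefore suffices to bound the number of admissible signed tuples of this form.

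Second, I will encode each $l \in L$ as the subset $S_l \subseteq \{1,\ldots,k\}$ of indices it involves (size $1$ or $2$), and then as an edge of the complete graph $K_{k+1}$ on the vertex set $V := \{1,\ldots,k,*\}$ by the identification $\{i\} \mapsto \{i,*\}$ and $\{i,j\} \mapsto \{i,j\}$. Writing $g(\mathbf{x}_i) = \pm\mathbf{x}_{l_i}$, the identity
\[
g(\mathbf{x}_{i,j}) \;=\; g(\mathbf{x}_i) \odot g(\mathbf{x}_j) \;=\; \pm\mathbf{x}_{S_{l_i} \triangle S_{l_j}}
\]
combined with Lemma \ref{lem:almost} forces $|S_{l_i} \triangle S_{l_j}| \in \{1, 2\}$ for all $i \neq j$. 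A short case check on singleton/pair combinations shows this condition is exactly equivalent to the edges of $K_{k+1}$ corresponding to $l_i$ and $l_j$ sharing a common vertex.

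Third, I will invoke the elementary graph-theoretic fact that when $k \geq 4$, any family of $k$ pairwise intersecting edges in a graph must share a common vertex. This follows from a short case analysis: three pairwise intersecting edges either form a star or a triangle, and no fourth edge can be appended to a triangle while preserving pairwise intersection. Since $|V| = k+1$, the edge star at any $v \in V$ contains exactly $k$ edges, so the labels $l_1, \ldots, l_k$ must comprise precisely the star at some $v$. Counting then gives $(k+1)$ choices of the common vertex, $k!$ orderings of the star's edges among $(l_1, \ldots, l_k)$, and $2^k$ independent sign choices, for a total upper bound of $2^k(k+1)!$.

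The main obstacle is finding the encoding that reveals the hidden enlargement of the symmetry group. Adjoining the extra vertex $*$ is the trick that upgrades $S_k$ to $S_{k+1}$ and accounts for the factor $(k+1)!$ rather than $k!$; without it one would naively bound by $2^k\,k!$ from the main-effect-permutation case alone and miss the symmetries where a main effect is swapped with a two-factor interaction. Once the encoding and the pairwise-intersection condition are in place, the counting is routine.
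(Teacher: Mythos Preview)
Your argument is correct and is a genuinely different route from the paper's.  The paper proves Lemma~\ref{lem:closingthegap} by a direct four-case count: it fixes $g(\mathbf{x}_1)$ and $g(\mathbf{x}_2)$ to be either main effects or two-factor interactions, works out in each case which $\pm\mathbf{x}_l$ are still available for $g(\mathbf{x}_3),\ldots,g(\mathbf{x}_k)$, multiplies out the possibilities, and sums the four subtotals to obtain $2^k(k+1)!$.  Your approach instead encodes each label $l$ as an edge of $K_{k+1}$ (adjoining a vertex $*$ so that the main effect $\mathbf{x}_i$ becomes the edge $\{i,*\}$), recognises that the Hadamard-product constraint $|S_{l_i}\triangle S_{l_j}|\in\{1,2\}$ is exactly the condition that the corresponding edges meet, and then invokes the elementary fact that four or more pairwise-intersecting edges must form a star.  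This collapses the four cases into a single count of $(k+1)\cdot k!\cdot 2^k$ and, more importantly, makes the appearance of $S_{k+1}$ structurally transparent rather than an arithmetical coincidence.  The only small point you leave implicit is that the edges $l_1,\ldots,l_k$ are distinct (so that the star is filled exactly), but this is immediate from orthogonality preservation since $g\in S_{2^k}$.  Overall your argument is cleaner and more conceptual; the paper's version has the virtue of being entirely self-contained with no appeal to an auxiliary graph fact.
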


\begin{proof}
Let  $g \in G^{\rm LP}$  be arbitrary.  Note that because  $g$  preserves Hadamard products, knowing how it acts on the main effects will determine how it acts on all of  $\mathcal{B}$.  By Lemma \ref{lem:almost},   $g(\mathbf{x_1}) = \pm\mathbf{x}_i$  for  $i \in \{1, \ldots, k, (1, 2), \ldots, (k - 1, k)\}$.  Because  $g(\mathbf{x}_{1, 2})$  must be of a similar form, the possibilities for  $g(\mathbf{x}_2)$  are restricted depending upon  $g(\mathbf{x}_1)$.  If  $g(\mathbf{x}_1) = \pm\mathbf{x}_i$  for  $i \in \{1, \ldots, k\}$, then  $g(\mathbf{x}_2) = \pm\mathbf{x}_l$  for  $l \in \{1, \ldots, i - 1, i + 1, \ldots, k, (1, i), \ldots, (i - 1, i), (i, i + 1), \ldots, (i, k)\}$.  Otherwise,  $g(\mathbf{x}_1) = \pm\mathbf{x}_{i, j}$  for  $i < j$  and  $i, j \in \{1, \ldots, k\}$, so  $g(\mathbf{x}_2) = \pm\mathbf{x}_l$  for  $l \in \{i, j, (1, i), (1, j), \ldots, (i - 1, i), (i - 1, j), (i, i + 1), (i + 1, j), \ldots, (i, j - 1), (j - 1, j), (i, j + 1), (j, j + 1), \ldots, (i, k), (j, k)\}$.  To determine how many distinct possibilities exist, we shall consider four cases, based on the forms of  $g(\mathbf{x}_1)$  and  $g(\mathbf{x}_2)$, respectively.  \par
	(Case 1:  main effect, main effect)  Suppose  $g(\mathbf{x}_1) = \pm\mathbf{x}_i$  for  $i \in \{1, \ldots, k\}$  and  $g(\mathbf{x}_2) = \pm\mathbf{x}_l$  for  $l \in \{1, \ldots, i - 1, i + 1, \ldots, k\}$.  Then there are  $2 k$  possibilities for  $g(\mathbf{x}_1)$  and  $2 (k - 1)$  for  $g(\mathbf{x}_2)$.  All of the  $(k - 2)$  remaining main effects must be sent to plus or minus the other  $(k - 2)$  main effects.  That is, there are  $(2 k) (2 (k - 1)) (2^{k - 2} (k - 2)!) = 2^k k!$  distinct possibilities.  \par
	(Case 2:  main effect, $2$-factor interaction)  Suppose  $g(\mathbf{x}_1) = \pm\mathbf{x}_i$  for  $i \in \{1, \ldots, k\}$  and  $g(\mathbf{x}_2) = \pm\mathbf{x}_l$  for  $l \in \{(1, i), \ldots, (i - 1, i), (i, i + 1), \ldots, (i, k)\}$.  Then there are  $2 k$  possibilities for  $g(\mathbf{x}_1)$  and  $2 (k - 1)$  for  $g(\mathbf{x}_2)$.  All of the  $(k - 2)$  remaining main effects must be sent to plus or minus the other  $(k - 2)$  viable $2$-factor interactions.  That is, there are  $(2 k) (2 (k - 1)) (2^{k - 2} (k - 2)!) = 2^k k!$  distinct possibilities.  \par
	(Case 3:  $2$-factor interaction, main effect)  Suppose  $g(\mathbf{x}_1) = \pm\mathbf{x}_{i, j}$  for  $i < j$  and  $i, j \in \{1, \ldots, k\}$  and  $g(\mathbf{x}_2) = \pm\mathbf{x}_l$  for  $l \in \{i, j\}$.  Then there are  $2 \binom{k}{2}$  possibilities for  $g(\mathbf{x}_1)$  and  $2 (2)$  for  $g(\mathbf{x}_2)$.  All of the  $(k - 2)$  remaining main effects must be sent to plus or minus the other  $(k - 2)$  viable $2$-factor interactions.  That is, there are  $(2 \binom{k}{2}) (2(2)) (2^{k - 2} (k - 2)!) = 2^k \binom{k}{2} (2) (k - 2)!$  distinct possibilities.  \par
	(Case 4:  $2$-factor interaction, $2$-factor interaction)  Suppose  $g(\mathbf{x}_1) = \pm\mathbf{x}_{i, j}$  for  $i < j$  and  $i, j \in \{1, \ldots, k\}$  and  $g(\mathbf{x}_2) = \pm\mathbf{x}_l$  for  $l \in \{(1, i), (1, j), \ldots, (i - 1, i), (i - 1, j), (i, i + 1), (i + 1, j), \ldots, (i, j - 1), (j - 1, j), (i, j + 1), (j, j + 1), \ldots, (i, k), (j, k)\}$.  Then there are  $2 \binom{k}{2}$  possibilities for  $g(\mathbf{x}_1)$  and  $2 (2 k - 4)$  for  $g(\mathbf{x}_2)$.  All of the  $(k - 2)$  remaining main effects must be sent to plus or minus the other  $(k - 3)$  viable $2$-factor interactions and the lone viable main effect  $\mathbf{x}_{\{i, j\} \cap l}$.  That is, there are  $(2 \binom{k}{2}) (2 (2 k - 4)) (2^{k - 2} (k - 2)!) = 2^k \binom{k}{2} (2 k - 4) (k - 2)!$  distinct possibilities.  \par
	Therefore, the total number of possibilities for all cases is
$$
2^k k!  +  2^k k! + 2^k \binom{k}{2} (2) (k - 2)! + 2^k \binom{k}{2} (2 k - 4) (k - 2)!= 2^k (k + 1)!.\\
$$
Thus,  $g$  is one of at most  $2^k (k + 1)!$  elements in  $G^{\rm LP}$.
\end{proof}

\begin{proof}[{\bf Proof of Theorem \ref{thm:main}}]

Let  $R = \langle\rho_1, \ldots, \rho_k\rangle$  where  $\rho_i$  acts on the full factorial design by sending  $(\mathbf{x}_1, \ldots, \mathbf{x}_i, \ldots, \mathbf{x}_k)$  to  $(\mathbf{x}_{1,i}, \ldots, \mathbf{x}_i, \ldots, \mathbf{x}_{i,k})$  for  $i = 1, \ldots, k$.  Note that elements of  $R$  preserve the full factorial design as well as  ${\rm Row}(\mathbf{M})$, so  $R \leq G$.  Furthermore, for  $i = 1, \ldots, k$,  $\rho_i^{-1} = \rho_i$.  For any distinct  $i, j \in \{1, \ldots, k\}$,  $\rho_i \rho_j \rho_i$  simply permutes  $\mathbf{x}_i$  and  $\mathbf{x}_j$  within the full factorial design, so clearly  $S_k \leq R$.  Now we see that  $\rho_j \rho_i S_k = \rho_i S_k$  for some distinct  $i, j \in \{1, \ldots, k\}$, so there are exactly  $k + 1$  left cosets of  $S_k$  within  $R$, and together these constitute $R$.  Hence,  $R \cong S_{k + 1}$.  Letting  $\phi \in S_2^k$  be arbitrary, we note that for any  $i = 1, \ldots, k$,  $\rho_i^{-1} = \rho_i$,  $\rho_i^{-1} \phi \rho_i = \phi'$  where  $\phi' \in S_2^k$, and  $\rho_i^{-1} \phi' \rho_i = \phi$.  Together with this information, the fact that $S_2^k \trianglelefteq S_2\wr S_k$ makes it clear that  $S_2^k \trianglelefteq S_2^k \rtimes S_{k + 1}$.  Now an isomorphic copy of $S_2^k \rtimes S_{k + 1}$ is contained in $G^{\rm LP}$, and  $|S_2^k \rtimes S_{k + 1}| = |S_2^k| |S_{k + 1}| = 2^k (k + 1)!$. Hence by Lemma \ref{lem:closingthegap},  $G^{\rm LP}$  must be isomorphic to  $S_2^k \rtimes S_{k + 1}$.
\end{proof}

For  $k = 3$, Theorem \ref{thm:main} does not hold.  To see this, consider the permutation  $g \in G^{\rm LP}$  such that
\begin{eqnarray*}
g(\mathbf{x}_1) & = & 0.5 \mathbf{x}_{1, 2} + 0.5 \mathbf{x}_{1, 3} + 0.5 \mathbf{x}_2 - 0.5 \mathbf{x}_3 \\
g(\mathbf{x}_2) & = & 0.5 \mathbf{x}_{1, 2} + 0.5 \mathbf{x}_{1, 3} - 0.5 \mathbf{x}_2 + 0.5 \mathbf{x}_3 \\
g(\mathbf{x}_3) & = & \mathbf{x}_1.
\end{eqnarray*}
Because this permutation sends main effects to forms other than those which are viable for  $k \geq 4$, we conclude  $|G^{\rm LP}| > 2^3 (3 + 1)! = 192$.  This observation is corroborated by the Geyer~\cite{Geyer2014b} algorithm and GAP~\cite{GAP2013}, which prove that in this case  $|G^{\rm LP}| = 1152$, and  $G^{\rm LP} \cong (S_4 \times S_4) \rtimes S_2$.
\section{The significance of the results and future research}
Firstly, the theoretical results of this paper confirm the computational observations of Geyer~\cite{Geyer2014b}.
This is a good check on the validity of the algorithm in Geyer~\cite{Geyer2014b} for finding $G^{\rm LP}$.
Also, the algorithm in Geyer~\cite{Geyer2014b} stalls as the problem size gets larger. Here, we have solved this 
problem for any formulation that has the same set of variables and LP relaxation feasible set as that of the Bulutoglu and Margot formulation
 of all strength $1$ and $2$ cases regardless of the problem size.

Exploiting the previously unknown symmetries in $G^{\rm LP}\cong S_2^k \rtimes S_{k + 1}$ for finding orthogonal arrays has already made it
possible to find OA$(160,9,2,4)$ and OA$(176,9,2,4)$ and prove the non-existence of   OA$(160,10,2,4)$ and OA$(176,10,2,4)$, see Bulutoglu and Ryan~\cite{Bulutoglu2016}.
The concept of OD-equivalence rejection of $\oant$ introduced in~\cite{Bulutoglu2016} was necessary in obtaining the results therein. OD-equivalence rejection is equivalent to exploiting $S_2^k \rtimes S_{k + 1}$ by keeping only one representative $\oant$ under the action of this group.

For $s=2$ it is easy to see that an isomorphic copy of $S_2 \wr S_k$  is always a subgroup of  $G^{\rm LP}$, and an isomorphic copy $S_2^k \rtimes S_{k + 1}$  is a subgroup when  $t$  is even. Based on the computational results in Geyer~\cite{Geyer2014b}, we conjecture that $G^{\rm LP} \cong S_2\wr S_k$ when $t$ is odd, and $G^{\rm LP} \cong S_2\rtimes S_{k+1}$ otherwise.
   Also, for $s>2$ it is easy to see that an isomorphic copy of $S_s\wr S_k$ is a subgroup of $G^{\rm LP}$. Computational results in Geyer~\cite{Geyer2014b} lead to the conjecture that $G^{\rm LP} \cong S_s\wr S_k$ for $s>2$. We propose settling these conjectures as open problems. All of our conjectures pertain to the Bulutoglu and Margot  formulation or any formulation obtained from the Bulutoglu and Margot formulation by applying 
the elementary row operations to its equality constraint matrix.
\section*{Acknowledgements}
The constructive comments from an anonymous Referee improved the presentation of our results  substantially. This research was supported by the AFOSR grant F4FGA04013J001.  The views expressed in this article are those of the authors and do not reflect the official policy or position of the United States Air Force, Department of Defense, or the U.S. Government.\\
\bibliographystyle{spmpsci}
\bibliography{bibliography}
\begin{appendix}
\section{R code and the Cases}
\footnotesize
\singlespacing
\begin{verbatim}
################################################################################
# Function-  expandcases - generates linear combinations with positive 1st term
# Input(s)-  lst - binary representation of main effects and interactions
# Output(s)- newlst - linear combinations with positive 1st term
################################################################################
expandcases<-function(lst){
ll<-length(lst)
newlst<-list()
r<-1
aa<-as.matrix(expand.grid(c(1,-1),c(1,-1),c(1,-1)))
aa<-cbind(1,aa)
for (j in 1:ll){
for (i in 1:8){
aa2<-rbind(aa[i,],lst[[j]])
 dimnames(aa2)[[2]]<-NULL
newlst[r]<-list(aa2)
r<-r+1
}}
return(newlst)
}
################################################################################
# Function-  allcheckbinvector - checks viability of linear combinations
# Input(s)-  lst - all linear combinations to be checked
# Output(s)- displays 1s for viable combinations and 0s otherwise
################################################################################
allcheckbinvector<-function(lst){
l<-length(lst)
for (i in 1:l){
print(checkbinaryvector(lst[[i]]))
}
}
################################################################################
# Function-  checkbinaryvector - checks viability of a linear combination
# Input(s)-  newcik - the linear combination to be checked
# Output(s)- 1 if combination is viable, 0 otherwise
################################################################################
checkbinaryvector<-function(newcik){
newcikmat<-newcik[-1,]
newcikcoff<-newcik[1,]
pp<-dim(newcikmat)
cols<-pp[2]
pp<-pp[1]
full<-c("expand.grid(c(1,-1)")
for (j in 1:(pp-1)){
full<-paste(full,",c(1,-1)")
}
full<-paste(full,")")
full<-parse(text = full)
full<-eval(full)
temp<-genprod(full,newcikmat[,1])
for (i in 2:cols){
temp<-cbind(temp,genprod(full,newcikmat[,i]))
}
finvec<-as.matrix(temp[,1]*newcikcoff[1])
for (i in 2:cols){
finvec<-finvec+newcikcoff[i]*temp[,i]
}
finvec<-as.matrix(as.integer(finvec/2))
if(min(finvec)==-1 & max(finvec)==1){
return(1)} else{return(0)}
}
################################################################################
# Function-  genprod - computes Hadamard product
# Input(s)-  full  2 level (+/-1) full factorial design
#            tt1 - indicator of which main effects are to be multiplied
# Output(s)- as.matrix(outfullcheck) - +/-1 form of main effect or interaction
################################################################################
genprod<-function(full,tt1){
outfullcheck<-full[,1]^(tt1[1])
pp<-length(tt1)
for (j in 1:(pp-1)){
outfullcheck<-outfullcheck*full[,(j+1)]^(tt1[(j+1)])
}
return(as.matrix(outfullcheck))
}
################################################################################
# Cases with 1 two-factor interaction
################################################################################
a12.1.2.3<-cbind(c(1,1,0),c(1,0,0),c(0,1,0),c(0,0,1))
c12.1.2.3<-expandcases(list(a12.1.2.3))
allcheckbinvector(c12.1.2.3)
a12.1.3.4<-cbind(c(1,1,0,0),c(1,0,0,0),c(0,0,1,0),c(0,0,0,1))
c12.1.3.4<-expandcases(list(a12.1.3.4))
allcheckbinvector(c12.1.3.4)
a12.3.4.5<-cbind(c(1,1,0,0,0),c(0,0,1,0,0),c(0,0,0,1,0),c(0,0,0,0,1))
c12.3.4.5<-expandcases(list(a12.3.4.5))
allcheckbinvector(c12.3.4.5)
################################################################################
# Cases with 2 two-factor interactions
################################################################################
a12.13<-cbind(c(1,1,0,0,0,0),c(1,0,1,0,0,0))
a12.34<-cbind(c(1,1,0,0,0,0),c(0,0,1,1,0,0))
a12.13.1<-cbind(a12.13,c(1,0,0,0,0,0))
a12.13.2<-cbind(a12.13,c(0,1,0,0,0,0))
a12.13.4<-cbind(a12.13,c(0,0,0,1,0,0))
a12.34.1<-cbind(a12.34,c(1,0,0,0,0,0))
a12.34.5<-cbind(a12.34,c(0,0,0,0,1,0))
a12.13.1.2<-cbind(a12.13.1,c(0,1,0,0,0,0))
a12.13.1.2<-a12.13.1.2[c(1:3),]
c12.13.1.2<-expandcases(list(a12.13.1.2))
allcheckbinvector(c12.13.1.2)
a12.13.1.4<-cbind(a12.13.1,c(0,0,0,1,0,0))
a12.13.1.4<-a12.13.1.4[c(1:4),]
c12.13.1.4<-expandcases(list(a12.13.1.4))
allcheckbinvector(c12.13.1.4)
a12.13.2.3<-cbind(a12.13.2,c(0,0,1,0,0,0))
a12.13.2.3<-a12.13.2.3[c(1:3),]
c12.13.2.3<-expandcases(list(a12.13.2.3))
allcheckbinvector(c12.13.2.3)
a12.13.2.4<-cbind(a12.13.2,c(0,0,0,1,0,0))
a12.13.2.4<-a12.13.2.4[c(1:4),]
c12.13.2.4<-expandcases(list(a12.13.2.4))
allcheckbinvector(c12.13.2.4)
a12.13.4.5<-cbind(a12.13.4,c(0,0,0,0,1,0))
a12.13.4.5<-a12.13.4.5[c(1:5),]
c12.13.4.5<-expandcases(list(a12.13.4.5))
allcheckbinvector(c12.13.4.5)
a12.34.1.2<-cbind(a12.34.1,c(0,1,0,0,0,0))
a12.34.1.2<-a12.34.1.2[c(1:4),]
c12.34.1.2<-expandcases(list(a12.34.1.2))
allcheckbinvector(c12.34.1.2)
a12.34.1.3<-cbind(a12.34.1,c(0,0,1,0,0,0))
a12.34.1.3<-a12.34.1.3[c(1:4),]
c12.34.1.3<-expandcases(list(a12.34.1.3))
allcheckbinvector(c12.34.1.3)
a12.34.1.5<-cbind(a12.34.1,c(0,0,0,0,1,0))
a12.34.1.5<-a12.34.1.5[c(1:5),]
c12.34.1.5<-expandcases(list(a12.34.1.5))
allcheckbinvector(c12.34.1.5)
a12.34.5.6<-cbind(a12.34.5,c(0,0,0,0,0,1))
a12.34.5.6<-a12.34.5.6[c(1:6),]
c12.34.5.6<-expandcases(list(a12.34.5.6))
allcheckbinvector(c12.34.5.6)
################################################################################
# Cases with 3 two-factor interactions
################################################################################
a12.13<-cbind(c(1,1,0,0,0,0,0),c(1,0,1,0,0,0,0))
a12.34<-cbind(c(1,1,0,0,0,0,0),c(0,0,1,1,0,0,0))
a12.13.14<-cbind(a12.13,c(1,0,0,1,0,0,0))
a12.13.23<-cbind(a12.13,c(0,1,1,0,0,0,0))
a12.13.24<-cbind(a12.13,c(0,1,0,1,0,0,0))
a12.13.45<-cbind(a12.13,c(0,0,0,1,1,0,0))
a12.34.56<-cbind(a12.34,c(0,0,0,0,1,1,0))
a12.13.14.1<-cbind(a12.13.14,c(1,0,0,0,0,0,0))
a12.13.14.1<-a12.13.14.1[c(1:4),]
c12.13.14.1<-expandcases(list(a12.13.14.1))
allcheckbinvector(c12.13.14.1)
a12.13.14.2<-cbind(a12.13.14,c(0,1,0,0,0,0,0))
a12.13.14.2<-a12.13.14.2[c(1:4),]
c12.13.14.2<-expandcases(list(a12.13.14.2))
allcheckbinvector(c12.13.14.2)
a12.13.14.5<-cbind(a12.13.14,c(0,0,0,0,1,0,0))
a12.13.14.5<-a12.13.14.5[c(1:5),]
c12.13.14.5<-expandcases(list(a12.13.14.5))
allcheckbinvector(c12.13.14.5)
a12.13.23.1<-cbind(a12.13.23,c(1,0,0,0,0,0,0))
a12.13.23.1<-a12.13.23.1[c(1:3),]
c12.13.23.1<-expandcases(list(a12.13.23.1))
allcheckbinvector(c12.13.23.1)
a12.13.23.4<-cbind(a12.13.23,c(0,0,0,1,0,0,0))
a12.13.23.4<-a12.13.23.4[c(1:4),]
c12.13.23.4<-expandcases(list(a12.13.23.4))
allcheckbinvector(c12.13.23.4)
a12.13.24.1<-cbind(a12.13.24,c(1,0,0,0,0,0,0))
a12.13.24.1<-a12.13.24.1[c(1:4),]
c12.13.24.1<-expandcases(list(a12.13.24.1))
allcheckbinvector(c12.13.24.1)
a12.13.24.3<-cbind(a12.13.24,c(0,0,1,0,0,0,0))
a12.13.24.3<-a12.13.24.3[c(1:4),]
c12.13.24.3<-expandcases(list(a12.13.24.3))
allcheckbinvector(c12.13.24.3)
a12.13.24.5<-cbind(a12.13.24,c(0,0,0,0,1,0,0))
a12.13.24.5<-a12.13.24.5[c(1:5),]
c12.13.24.5<-expandcases(list(a12.13.24.5))
allcheckbinvector(c12.13.24.5)
a12.13.45.1<-cbind(a12.13.45,c(1,0,0,0,0,0,0))
a12.13.45.1<-a12.13.45.1[c(1:5),]
c12.13.45.1<-expandcases(list(a12.13.45.1))
allcheckbinvector(c12.13.45.1)
a12.13.45.2<-cbind(a12.13.45,c(0,1,0,0,0,0,0))
a12.13.45.2<-a12.13.45.2[c(1:5),]
c12.13.45.2<-expandcases(list(a12.13.45.2))
allcheckbinvector(c12.13.45.2)
a12.13.45.5<-cbind(a12.13.45,c(0,0,0,0,1,0,0))
a12.13.45.5<-a12.13.45.5[c(1:5),]
c12.13.45.5<-expandcases(list(a12.13.45.5))
allcheckbinvector(c12.13.45.5)
a12.13.45.6<-cbind(a12.13.45,c(0,0,0,0,0,1,0))
a12.13.45.6<-a12.13.45.6[c(1:6),]
c12.13.45.6<-expandcases(list(a12.13.45.6))
allcheckbinvector(c12.13.45.6)
a12.34.56.1<-cbind(a12.34.56,c(1,0,0,0,0,0,0))
a12.34.56.1<-a12.34.56.1[c(1:6),]
c12.34.56.1<-expandcases(list(a12.34.56.1))
allcheckbinvector(c12.34.56.1)
a12.34.56.7<-cbind(a12.34.56,c(0,0,0,0,0,0,1))
c12.34.56.7<-expandcases(list(a12.34.56.7))
allcheckbinvector(c12.34.56.7)
################################################################################
# Cases with 4 two-factor interactions
################################################################################
a12.13<-cbind(c(1,1,0,0,0,0,0,0),c(1,0,1,0,0,0,0,0))
a12.34<-cbind(c(1,1,0,0,0,0,0,0),c(0,0,1,1,0,0,0,0))
a12.13.14<-cbind(a12.13,c(1,0,0,1,0,0,0,0))
a12.13.23<-cbind(a12.13,c(0,1,1,0,0,0,0,0))
a12.13.24<-cbind(a12.13,c(0,1,0,1,0,0,0,0))
a12.13.45<-cbind(a12.13,c(0,0,0,1,1,0,0,0))
a12.34.56<-cbind(a12.34,c(0,0,0,0,1,1,0,0))
a12.13.14.15<-cbind(a12.13.14,c(1,0,0,0,1,0,0,0))
a12.13.14.15<-a12.13.14.15[c(1:5),]
c12.13.14.15<-expandcases(list(a12.13.14.15))
allcheckbinvector(c12.13.14.15)
a12.13.14.23<-cbind(a12.13.14,c(0,1,1,0,0,0,0,0))
a12.13.14.23<-a12.13.14.23[c(1:4),]
c12.13.14.23<-expandcases(list(a12.13.14.23))
allcheckbinvector(c12.13.14.23)
a12.13.14.25<-cbind(a12.13.14,c(0,1,0,0,1,0,0,0))
a12.13.14.25<-a12.13.14.25[c(1:5),]
c12.13.14.25<-expandcases(list(a12.13.14.25))
allcheckbinvector(c12.13.14.25)
a12.13.14.56<-cbind(a12.13.14,c(0,0,0,0,1,1,0,0))
a12.13.14.56<-a12.13.14.56[c(1:6),]
c12.13.14.56<-expandcases(list(a12.13.14.56))
allcheckbinvector(c12.13.14.56)
a12.13.23.45<-cbind(a12.13.23,c(0,0,0,1,1,0,0,0))
a12.13.23.45<-a12.13.23.45[c(1:5),]
c12.13.23.45<-expandcases(list(a12.13.23.45))
allcheckbinvector(c12.13.23.45)
a12.13.24.35<-cbind(a12.13.24,c(0,0,1,0,1,0,0,0))
a12.13.24.35<-a12.13.24.35[c(1:5),]
c12.13.24.35<-expandcases(list(a12.13.24.35))
allcheckbinvector(c12.13.24.35)
a12.13.24.56<-cbind(a12.13.24,c(0,0,0,0,1,1,0,0))
a12.13.24.56<-a12.13.24.56[c(1:6),]
c12.13.24.56<-expandcases(list(a12.13.24.56))
allcheckbinvector(c12.13.24.56)
a12.13.45.56<-cbind(a12.13.45,c(0,0,0,0,1,1,0,0))
a12.13.45.56<-a12.13.45.56[c(1:6),]
c12.13.45.56<-expandcases(list(a12.13.45.56))
allcheckbinvector(c12.13.45.56)
a12.13.45.67<-cbind(a12.13.45,c(0,0,0,0,0,1,1,0))
a12.13.45.67<-a12.13.45.67[c(1:7),]
c12.13.45.67<-expandcases(list(a12.13.45.67))
allcheckbinvector(c12.13.45.67)
a12.34.56.78<-cbind(a12.34.56,c(0,0,0,0,0,0,1,1))
c12.34.56.78<-expandcases(list(a12.34.56.78))
allcheckbinvector(c12.34.56.78)
\end{verbatim}
\end{appendix}

\end{document}